\theoremstyle{definition} 
\newtheorem{theorem}{Theorem}[section]
\newtheorem{corollary}[theorem]{Corollary}
\newtheorem{coro}[theorem]{Corollary}
\newtheorem{lemma}[theorem]{Lemma}
\newtheorem{proposition}[theorem]{Proposition}
\newtheorem{prop}[theorem]{Proposition}
\def\cf{cf.\kern.3em}
\def\eg{e.g.\kern.3em}
\def\ie{i.e.\kern.3em}
\def\resp{\text{resp.}\kern.3em}
\def\etc{etc.\kern.3em}
\title{Graphs of Gonality Three}
\author{Ivan Aidun, Frances Dean, Ralph Morrison, Teresa Yu, Julie Yuan}
\date{}
\begin{document}

\maketitle

\begin{abstract}    In 2013, Chan classified all metric hyperelliptic graphs, proving that divisorial gonality and geometric gonality are equivalent in the hyperelliptic case. We show that such a classification extends to combinatorial graphs of divisorial gonality three, under certain edge- and vertex-connectivity assumptions. We also give a construction for graphs of divisorial gonality three, and provide conditions for determining when a graph is not of divisorial gonality three.
\end{abstract}

\noindent \textbf{Note:  the original published version of this paper had an error in the statement of the main theorem; a correction of this will appear as a corrigendum published by the same journal.  The corrections have been incorporated into this arXiv version.}

\section{Introduction}  \setcounter{section}{1}

  Tropical geometry studies graphs as discrete analogues of algebraic curves.  A motivating goal of this program is to prove theorems in algebraic geometry using combinatorial methods, as in \cite{cdpr}.  In \cite{bn}, Baker and Norine define a theory of divisors on combinatorial graphs similar to divisor theory on curves, proving a Riemann-Roch type theorem. This was extended by \cite{gk} and \cite{mz} to metric graphs, which have lengths associated to each edge. To model maps between curves, harmonic morphisms between simple graphs were introduced in \cite{ura}, extended to multigraphs in \cite{bn2}, and finally to metric graphs in \cite{chan}.

An important invariant of an algebraic curve is its \emph{gonality}. This is the minimum degree of a divisor of rank $1$, or equivalently, the minimum degree of a morphism from the curve to a line \cite[Section 8C]{syzygies}. We can extend these definitions to combinatorial and metric graphs, using either divisor theory or morphisms from the graph to a tree. However, unlike in classical algebraic geometry, these two notions of gonality defined on graphs are in general inequivalent, as demonstrated in \cite{liyau}. We thus define two different types of gonality: \emph{divisorial gonality} and \emph{geometric gonality}. (Whenever we refer to the \emph{gonality} of a graph without specifying which type, we mean the divisorial gonality.)

Our two notions of gonality happen to agree when either is equal to $1$: divisorial gonality is equal to $1$ if and only if the graph is a tree, and the same is true of geometric gonality \cite[Lemma 1.1 and Example 3.3]{bn2}. This no longer holds when our graph has higher divisorial gonality; for example, the \emph{banana graph}, which has two vertices and $n\geq 2$ edges connecting the two vertices, has divisorial gonality $2$ and geometric gonality $n$ \cite{liyau}. However, this turns out to be the only such example for graphs of divisorial gonality $2$, as shown by the following result.

\begin{theorem}[Theorem 1.3 in \cite{chan}, slightly reordered]\label{thm:chan}
Let $\Gamma$ be a metric graph with no points of valence $1$ and canonical loopless model $(G,\ell)$. Then the following are equivalent:
\begin{enumerate}
    \item $G$ has \textup{(}divisorial\textup{)} gonality $2$.
    \item  There exists a non-degenerate harmonic morphism $\varphi : G \to T$ where $\text{deg}(\varphi) = 2$ and $T$ is a tree, or $|V(G)|=2$.
    \item There exists an involution $i:G\rightarrow G$ such that $G/i$ is a tree.
\end{enumerate}
\end{theorem}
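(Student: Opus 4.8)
The plan is to establish the cyclic chain of implications $(3)\Rightarrow(2)\Rightarrow(1)\Rightarrow(3)$. The first two are essentially formal, being routine consequences of the theory of harmonic morphisms and of reduced divisors; the last --- manufacturing the hyperelliptic involution out of an abstract degree-$2$ divisor of rank $1$ --- is where the real difficulty lies, and is the step I expect to be the main obstacle.

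For $(3)\Rightarrow(2)$: let $i$ be an isometric involution of $\Gamma$ with $\Gamma/i$ a tree. After subdividing each edge that $i$ reverses at its midpoint --- which changes neither $\Gamma$ nor the quotient --- the quotient map $\pi\colon\Gamma\to\Gamma/i$ is a morphism of graphs, and a local computation shows it is harmonic and non-degenerate, with local degree $1$ off the fixed locus of $i$ and $2$ along it. Since $i$ is not the identity (else $\Gamma=\Gamma/i$ would be a tree, hence have a point of valence $1$), the generic fibre of $\pi$ has two points, so $\deg\pi=2$. Passing between $\Gamma$ and its canonical model $G$ as needed, this $\pi$ is the morphism demanded by $(2)$ when $|V(G)|>2$; when $|V(G)|=2$ we invoke the alternative clause of $(2)$ directly.

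For $(2)\Rightarrow(1)$: if $|V(G)|=2$ then $G$ is a banana graph (it has no valence-$1$ vertex, so at least two edges), which is hyperelliptic --- for instance the sum of its two vertices has rank at least $1$ --- so $G$, being of positive genus and hence not a tree, has gonality exactly $2$. Otherwise let $\varphi\colon G\to T$ be a non-degenerate degree-$2$ harmonic morphism to a tree. For a point $x\in T$, the pullback $\varphi^{*}x$ is effective of degree $\deg\varphi=2$; since pullback along a non-degenerate harmonic morphism does not decrease rank and a degree-$1$ divisor on a tree has rank $1$ (Riemann--Roch in genus $0$), we get $r_G(\varphi^{*}x)\ge1$, so $G$ has gonality $\le2$. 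It has gonality $\ge2$ because, having no valence-$1$ vertex and at least one edge, it contains a cycle and is not a tree.

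For $(1)\Rightarrow(3)$: fix $D$ with $\deg D=2$ and $r(D)\ge1$. For each point $p$ let $D_p$ be the unique $p$-reduced divisor linearly equivalent to $D$; since $r(D)\ge1$ one has $D_p(p)\ge1$, and as $\deg D_p=2$ the divisor $D_p-p$ is a single point $\iota(p)$, so $D\sim p+\iota(p)$ for every $p$. The heart of the matter is to show $\iota$ is a continuous isometric involution of $\Gamma$. I would prove this by analyzing how the $p$-reduced representative of the fixed class $[D]$ moves as $p$ varies: running Dhar's burning algorithm from $p$ and tracking the at-most-one further chip of $D_p$ should show both that $p\mapsto\iota(p)$ is piecewise-linear and continuous and that $\iota\circ\iota=\mathrm{id}$. (One cannot simply argue ``$p+\iota(p)\sim D\sim\iota(p)+\iota(\iota(p))$, hence $p\sim\iota(\iota(p))$, hence $p=\iota(\iota(p))$'', because distinct points of a gonality-$2$ graph can be linearly equivalent --- this already occurs on a dumbbell --- so genuine structural input, whether via the burning analysis or via an explicit classification of gonality-$2$ graphs, seems unavoidable here.) Granting that $\iota$ is a continuous involution, the rest is short: as in the proof of $(3)\Rightarrow(2)$, $\pi\colon\Gamma\to B:=\Gamma/\iota$ is a degree-$2$ harmonic morphism, and for each $x\in B$ and $p\in\pi^{-1}(x)$ we have $\pi^{*}x=p+\iota(p)\sim D$; pushing forward, $2x\sim\pi_{*}D$ with the right side independent of $x$. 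Hence $2x\sim2x'$ for all $x,x'\in B$, so the Abel--Jacobi map $B\to\operatorname{Jac}(B)$ composed with multiplication by $2$ is constant; since $\operatorname{Jac}(B)[2]$ is finite and $B$ is connected, the Abel--Jacobi map is itself constant, i.e.\ $x\sim x'$ for all $x,x'\in B$. Then $B$ carries a degree-$1$ divisor of rank $\ge1$, so its gonality is $1$ and $B$ is a tree, closing the cycle.
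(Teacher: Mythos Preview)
The paper does not prove this theorem: it is quoted as Theorem~1.3 of \cite{chan} and serves only as the motivating precedent for the paper's own gonality-$3$ results. There is therefore no proof in the present paper against which to compare your attempt.

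For what it is worth, your outline is sound and correctly identifies $(1)\Rightarrow(3)$ as the only step with real content. You are also right that the naive argument for $\iota^{2}=\mathrm{id}$ (cancelling $\iota(p)$ from both sides of $p+\iota(p)\sim\iota(p)+\iota(\iota(p))$) is insufficient, and that some structural input---continuity of reduced divisors as the base point varies, or an explicit description of $|D|$---is required; this is indeed where the work in \cite{chan} lies. Your argument that $B=\Gamma/\iota$ is a tree, via $2x\sim 2x'$, finiteness of $\operatorname{Jac}(B)[2]$, and connectedness, is a slightly different route from the one this paper takes in its gonality-$3$ analogue (Theorem~\ref{thm:multigraph}), where the tree conclusion comes instead from injectivity of the induced map $\overline{\varphi}$ on Jacobians \cite[Theorem~4.13]{bn2}; both arguments are valid.
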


Note that the only (connected) graphs $G$ with $|V(G)| = 2$ are those belonging to the family of banana graphs. Hence, Theorem \ref{thm:chan} implies that, for all other metric graphs, having divisorial gonality $2$ and having geometric gonality $2$ are equivalent.

Our main result in this paper is an analogue of Theorem \ref{thm:chan} for graphs of divisorial gonality $3$. Although Theorem \ref{thm:chan} is stated for metric graphs, ours holds only for combinatorial graphs, without the data of lengths associated to the edges.  For one portion we need an additional assumption on our graph $G$ called the \emph{zero-three condition}, defined at the beginning of Section \ref{section:simple}.

\begin{theorem}
\label{thm:maintheorem}
Consider the following $3$ conditions:
\begin{enumerate}
    \item $G$ has \textup{(}divisorial\textup{)} gonality $3$.
    \item There exists a non-degenerate harmonic morphism $\varphi : G \to T$ where $\text{deg}(\varphi) = 3$ and $T$ is a tree.
        \item There exists a cyclic automorphism $\sigma: G \to G$ of order $3$ that does not fix any edge of $G$ satisfying the property that $G/\sigma$ is a tree.
\end{enumerate}
If $G$ is $3$-edge-connected, then (1) and (2) are equivalent.
Moreover, if $G$ is simple and $3$-vertex-connected, then condition (3) implies conditions (1) and (2); and if $G$ satisfies the zero-three condition, then (1) implies (3).
\end{theorem}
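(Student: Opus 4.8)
\textbf{Overview and the direction \textup{(2)}$\Rightarrow$\textup{(1)}.} I would prove the three implications in turn, using \textup{(1)}$\Leftrightarrow$\textup{(2)} to drive the other two. For \textup{(2)}$\Rightarrow$\textup{(1)} (with $G$ $3$-edge-connected): pull back a point $p$ in the interior of an edge of $T$, so that $\varphi^{*}(p)$ is effective of degree $3$ and, for every vertex $v$, the equivalent effective divisor $\varphi^{*}(\varphi(v))$ contains $v$ with coefficient $m_{\varphi}(v)\ge 1$ by non-degeneracy; hence $\varphi^{*}(p)$ has rank at least $1$ and the gonality is at most $3$. To get equality I would exclude gonality $1$ (a $3$-edge-connected graph on $\ge 2$ vertices has no bridge, so is not a tree) and gonality $2$ (by Theorem~\ref{thm:chan}, a $3$-edge-connected graph of gonality $2$ is a banana graph $B_{n}$, $n\ge 3$, handled as an explicit exceptional family --- this small-case bookkeeping being the subject of the corrigendum).

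\textbf{The heart: \textup{(1)}$\Rightarrow$\textup{(2)} when $G$ is $3$-edge-connected.} Start from a divisor $D$ of degree $3$ and rank at least $1$, and study $|D|$ via its $v$-reduced representatives $D_{v}$ and Dhar's burning algorithm. Each $D_{v}$ satisfies $D_{v}\ge v$ and so has one of the shapes $3v$, $2v+w$, $v+w+x$; tracking how $D_{v}$ evolves as the base point $v$ slides along an edge should organize the effective members of $|D|$ into a tree-shaped family, from which I would read off the target tree $T$ and the map $\varphi\colon G\to T$. It then remains to check that $\varphi$ is harmonic --- the local-degree identity at each vertex is the delicate point --- and non-degenerate; $3$-edge-connectivity enters precisely to rule out the low-connectivity configurations of reduced divisors that would break harmonicity or force a contracted edge. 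I expect this verification, with its case analysis on the shapes of reduced divisors, to be the main obstacle of the whole proof, since it is here that divisorial and geometric gonality are being forced to agree.

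\textbf{\textup{(3)}$\Rightarrow$\textup{(1)}, \textup{(2)} when $G$ is simple and $3$-vertex-connected.} Take $\varphi$ to be the quotient map $G\to G/\sigma=:T$. Since $\sigma$ has order $3$ and fixes no edge, every $\sigma$-orbit of edges has size $3$, which makes $\varphi$ a harmonic morphism of degree $|\langle\sigma\rangle|=3$; it is non-degenerate because the local degree is $3$ at a $\sigma$-fixed vertex (its incident edges splitting into size-$3$ orbits) and $1$ elsewhere. As $T$ is a tree this gives \textup{(2)}, and since $3$-vertex-connectivity implies $3$-edge-connectivity the previous step gives \textup{(1)}. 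The gonality is exactly $3$: a simple $3$-vertex-connected graph has at least four vertices and is not a tree, and it is not hyperelliptic either, since by Theorem~\ref{thm:chan} a non-tree graph of gonality $2$ has an involution whose quotient is a tree, and the preimage of an interior point of an edge of that tree is a vertex cut of size at most $2$.

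\textbf{\textup{(1)}$\Rightarrow$\textup{(3)} under the zero-three condition.} First use \textup{(1)}$\Rightarrow$\textup{(2)} to produce a non-degenerate degree-$3$ harmonic morphism $\varphi\colon G\to T$, then upgrade it to a $\mathbb{Z}/3$-quotient. The zero-three condition is designed to forbid ramification of type $2v+w$, so that every ramified fiber has the form $3v$ with the edges at $v$ partitioned into size-$3$ orbits; using this I would build an order-$3$ automorphism $\sigma$ of $G$ sheet by sheet, gluing the local cyclic permutations of the three sheets along $T$ --- consistent because $T$, being a tree, has no cycle to obstruct the gluing --- with the fully ramified vertices as the fixed points of $\sigma$. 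Then $\sigma$ has order $3$, fixes no edge (full ramification along an entire edge being likewise excluded by the zero-three condition), and $G/\sigma=T$ is a tree. The subtle point is translating the zero-three hypothesis into a statement about $|D|$ that genuinely rules out the $2v+w$ configurations for a well-chosen witnessing divisor; an alternative would be to mimic Chan's reduced-divisor argument directly, defining $\sigma$ on vertices via a cyclic conjugation relation inside $|D|$.
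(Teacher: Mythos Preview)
Your outlines for (2)$\Rightarrow$(1) and (3)$\Rightarrow$(1),(2) are essentially the paper's arguments, modulo two small points: since $G$ is a combinatorial graph you should pull back a \emph{vertex} of $T$, not an interior point of an edge; and the lower bound $\text{gon}(G)\ge 3$ follows in one line from $\text{gon}(G)\ge\min\{|V(G)|,\eta(G)\}$ rather than by excluding hyperellipticity via Theorem~\ref{thm:chan}. Your route for (1)$\Rightarrow$(3) --- first produce the degree-$3$ harmonic morphism $\varphi$ via (1)$\Rightarrow$(2), then argue that the fibers carry compatible cyclic orderings because the target is already known to be a tree (so propagation meets no monodromy) and the zero-three condition handles the intra-fiber edges --- is in fact a little cleaner than the paper's. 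The paper builds $\sigma$ directly on the equivalence classes and runs a separate Dhar's-burning argument to rule out incompatible orderings, without first invoking that $G/\!\sim_D$ is a tree; going through (2) as you do lets the tree structure absorb that consistency check.

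The genuine gap is in (1)$\Rightarrow$(2). You propose to track the $v$-reduced representatives $D_v$ as $v$ slides along edges and ``organize the effective members of $|D|$ into a tree-shaped family'', with $3$-edge-connectivity entering to ``rule out low-connectivity configurations of reduced divisors that would break harmonicity''. But the decisive fact --- which you never isolate --- is that for each vertex $v$ there is a \emph{unique} effective divisor $D'\sim D$ with $v\in\text{supp}(D')$, not merely a unique $v$-reduced one. This is precisely where $3$-edge-connectivity acts, and it acts globally through the Jacobian rather than through local configurations: by Baker--Norine, the Abel--Jacobi map $S^{(2)}:\text{Div}_+^2(G)\to\text{Jac}(G)$ is injective exactly when $G$ is $3$-edge-connected, and this injectivity forces $(v_2)+(v_3)$ to be determined by $v_1$ whenever $D\sim(v_1)+(v_2)+(v_3)$. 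Only with this uniqueness in hand do the supports $[v]_D=\text{supp}(D_v)$ give a well-defined partition of $V(G)$; a short chip-firing argument then shows adjacent classes are joined by exactly three edges with $|E(u,[v]_D)|=D_u(u)$, so the quotient map $G\to G/\!\sim_D$ is visibly non-degenerate harmonic of degree $3$, and the target is a tree because the induced map $\overline{\varphi}:\text{Jac}(G/\!\sim_D)\to\text{Jac}(G)$ is injective. Without the Abel--Jacobi step your ``tree-shaped family'' has no reason to be well-defined, and no amount of case analysis on the shapes $3v$, $2v+w$, $v+w+x$ will supply it.
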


The decision to restrict our attention to $3$-vertex-connected graphs in the simple case is in part supported by Proposition \ref{prop:twoconnect}, which shows that a simple, bridgeless, trivalent graph that is not $3$-vertex-connected must have gonality at least $4$. Moreover, the example graph in Figure \ref{fig:no_automorphism} shows that $3$-edge-connectedness is not a strong enough assumption to guarantee the existence of a cyclic automorphism of order $3$; and the example graph in Figure \ref{fig:wheel} shows that the zero-three condition is necessary for (1) implies (3), even for $3$-vertex-connected graphs. To justify our $3$-edge-connected assumption for the multigraph case, we point to recent work by Corry and Steiner, appearing in \cite[Theorem 10.24]{cp}, which shows that for $d$-edge-connected graphs with more than $d$ vertices, the set of degree $d$ non-degenerate harmonic morphisms to a tree is in bijection with divisors of degree $d$ and rank $1$ on the graph. With some extra work to rule out the possibility of hyperellipticity, this result can be used to prove our main theorem for multigraphs. However, the proof we present is independently formulated.

Our paper is organized as follows. In Section \ref{section:definitions}, we establish definitions and notation, and review previous results on divisors and harmonic morphisms of graphs. In Section \ref{section:multi}, we prove Theorem \ref{thm:multigraph}, which is the first part of Theorem \ref{thm:maintheorem} and applies to general multigraphs. In Section \ref{section:simple}, we restrict our attention to simple graphs in order to prove Theorem \ref{thm:simpletheorem}, which adds the third condition in Theorem \ref{thm:maintheorem}. We also give a criterion for identifying graphs with gonality strictly greater than $3$.  Finally, in Section \ref{section:constructions}, we present a construction for a (proper) subset of graphs of gonality $3$.

\section{Definitions and Notation}
\label{section:definitions}

We define a \textit{graph} $G = (V, E)$ with vertex set $V(G)$ and edge set $E(G)$ to be a finite, connected, loopless, multigraph. Throughout this paper, all graphs are assumed to be combinatorial (that is, without lengths assigned to edges) unless otherwise stated. Graphs with no multiedges are called \textit{simple}. Given a vertex $v \in V(G)$ and an edge $e \in E(G)$, we use the notation $v \in e$ to indicate that $v$ is an endpoint of $e$. For $u,v \in V(G)$, define $E(u,v) \coloneqq \{ e \in E(G) : u \in e, v \in e\}$.  Similarly, for $A, B \subset V(G)$ define $E(A, B) \coloneqq \{ e \in E(G) : e \in E(a, b) \; \text{for some} \; a \in A, b \in B\}$. The \textit{valence} of a vertex $v \in V(G)$ is defined as $\text{val}(v) \coloneqq |\{ e \in E(G) : v \in e \}|$. We define the \textit{genus} of a graph $G$ as $g(G) \coloneqq |E(G)| - |V(G)| + 1$. A graph of genus 0 is called a \textit{tree}.

A graph $G = (V, E)$ is \textit{$k$-edge-connected} if, for any set $W$ of $k-1$ edges, the subgraph $(V,E-W)$ is connected. We let $\eta(G)$ denote the \textit{edge-connectivity} of the graph. That is, $\eta(G)$ is the maximum integer $k$ such that $G$ is $k$-edge-connected. A \textit{bridge} of $G$ is an edge whose deletion strictly increases the number of connected components of $G$. A graph is \textit{bridgeless} if it has no bridges, or equivalently if it is $2$-edge-connected.

Similarly, a graph $G = (V, E)$ is \textit{$k$-vertex-connected} (or just \textit{$k$-connected}) if, for any set $U$ of $k-1$ vertices, the subgraph $(V-U,E)$ is connected. We let $\kappa(G)$ denote the \textit{vertex-connectivity} of the graph. That is, $\kappa(G)$ is the maximum integer $k$ such that $G$ is $k$-vertex-connected.  (By convention, we set $\kappa(G)=|V|-1$ if every pair of vertices in $G$ is joined by an edge.) Since removing a vertex from a graph removes all edges incident to that vertex, we have that $\kappa(G)\leq \eta(G)$ for any graph $G$.

\subsection{Divisor Theory on Graphs}

We now review the key concepts of divisor theory on graphs, as developed in \cite{baker}. A \textit{divisor} $D$ on a graph $G$ is a $\mathbb{Z}$-linear combination of vertices. We will often explicitly write out divisors with the notation
\[
D = \sum_{v \in V(G)} D(v) \cdot (v),
\]
where $D(v)$ denotes the value of $D$ at $v$.
The set of all divisors $\text{Div}(G)$ on a graph $G$ forms an abelian group under component-wise addition. The \textit{degree} of a divisor $D$ is defined as the sum of its integer coefficients:

\[
\text{deg}(D) \coloneqq \sum_{v \in V(G)} D(v).
\]

For a fixed $k \in \mathbb{Z}$, let $\text{Div}^k(G)$ be the set of all divisors of degree $k$ on $G$. A divisor $D$ is \textit{effective} if, for all $v \in V(G)$, $D(v) \geq 0$. Let $\text{Div}_+(G)$ be the set of all effective divisors on a graph $G$ and for $k \in \mathbb{Z}_{>0}$, let $\text{Div}_+^k(G)$ be the set of all effective divisors of degree $k$ on $G$. For a given effective divisor $D$, we define the \textit{support} of $D$ as
\[
\text{supp}(D) \coloneqq \{ v \in V(G) : D(v) > 0 \}.
\]

The \textit{Laplacian} $\mathcal{L}(G)$ of a graph $G$ is the $|V|\times |V|$ matrix with entries

\[
\mathcal{L}_{v,w} = 
\begin{cases}
\text{val}(v) \quad &\text{if $v = w$} \\
-| E(v, w) | \quad &\text{if $v \neq w$}.
\end{cases}
\]
We use $\Delta : \text{Div}(G) \to \text{Div}(G)$ to denote the \textit{Laplace operator} associated with the Laplacian matrix. A \textit{principal divisor} is a divisor in the image of $\Delta$. We use $\text{Prin}(G)$ to denote the set of principal divisors on a graph $G$, \ie $\text{Prin}(G) = \Delta(\text{Div}(G))$. Notice that $\text{Prin}(G)$ is a normal subgroup of $\text{Div}^0(G)$. We can therefore define the \textit{Jacobian} $\text{Jac}(G)$ of a graph $G$ as the quotient group $\text{Div}^0(G) / \text{Prin}(G)$.

Now, define an equivalence relation $\sim$ on divisors such that $D \sim E$ if and only if $D - E \in \text{Prin}(G)$. We say in this case that $D$ and $E$ are \textit{linearly equivalent} and define the \textit{linear system} associated with a divisor $D$ as
\[
|D| \coloneqq \{ E \in \text{Div}_+(G) : E \sim D \}.
\]
For a divisor $D \in \text{Div}(G)$, we define the \textit{rank} of $D$ as $r(D) \coloneqq -1$ if $|D| = \varnothing$, and otherwise as
\[
r(D) \coloneqq \max \{ k \in \mathbb{Z} : |D - F| \neq \varnothing \; \text{for all} \; F \in \text{Div}_+^k(G) \}.
\]
The \textit{gonality} of a graph $G$ is defined as
\[
\text{gon}(G) \coloneqq \min \{ \text{deg}(D) : D \in \text{Div}_+(G), r(D) \geq 1 \}.
\]  Later, when we need to distinguish between two different types of gonality, this will be referred to as \emph{divisorial gonality}.

\subsection{Baker-Norine Chip-Firing}

The definition of gonality provided in the previous section has an equivalent statement in terms of chip-firing games on graphs. In a chip-firing game, we think about placing integer numbers of poker chips on the vertices of our graph.  A negative number of poker chips corresponds to a vertex being ``in debt''.  A \emph{chip-firing move} involves selecting a vertex $v \in V(G)$, subtracting $\text{val}(v)$ chips from $v$, and adding $|E(v,v')|$ chips to each $v'$ adjacent to $v$.

The \textit{Baker-Norine chip-firing game} is played with the following rules:

\begin{enumerate}
    \item One player places $k$ chips on the vertices $V(G)$ of a graph $G$ in any arrangement.
    \item Another player (the adversary) chooses a vertex $v \in V(G)$ from which to subtract a chip, possibly putting $v$ into debt.
    \item The first player wins if they can reach a configuration of chips where no vertex is in debt via a sequence of chip-firing moves.  Otherwise, the adversary wins.
\end{enumerate}

Notice that these ``chip configurations'' correspond to divisors on graphs. By standard results as in \cite{bn}, chip-firing moves correspond to subtracting principal divisors; the divisors present before and after chip-firing are equivalent; and the gonality of a graph is equivalent to the minimum number of chips $k$ required to guarantee that the first player has a winning strategy in the Baker-Norine chip-firing game. Hence, we define a \textit{winning divisor} $D$ to be a divisor satisfying $r(D) \geq 1$. 

Since chip-firing is a commutative operation, we can chip-fire from an entire subset $A \subset V(G)$ at once by sending a chip along each edge outgoing from the subset. Let $\mathbbm{1}_A$ denote the indicator vector on $A$. Then, given a divisor $D$, the resulting divisor after chip-firing from the subset $A$ is $D - \Delta \mathbbm{1}_A$. We define the \textit{outdegree} of $A$ from a vertex $v \in A$ to be the number of edges leaving $A$ from $v$, so
\[
\text{outdeg}_v(A) \coloneqq | E(\{v\}, V(G) - A) |.
\]
Hence, a chip-firing move from a subset $A \subset V(G)$ sends $\text{outdeg}_v(A)$ chips from each vertex $v \in A$ into $V(G) - A$. The total outdegree of $A$ is defined as $\text{outdeg}_A(A) \coloneqq \sum_{v \in A} \text{outdeg}_v(A)$. The following result is proven in \cite{db}.

\begin{lemma}
\label{lemma:finiteseq}
Given an effective divisor $D$ and an equivalent effective divisor $D'$, there exists a finite sequence of subset-firing moves which transforms $D$ into $D'$ without ever inducing debt in any vertex of the graph. 
\end{lemma}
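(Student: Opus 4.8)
The plan is to prove Lemma~\ref{lemma:finiteseq} by analyzing the structure of effective divisors within a single linear equivalence class, following the approach of Dhar's burning algorithm and the theory of reduced divisors. The key observation is that any subtraction of a principal divisor $\Delta f$ can be decomposed into a sequence of subset-firing moves, since we can order the vertices by the values of $f$. Specifically, if $D' = D - \Delta f$ for some integer-valued function $f$ on $V(G)$, I would first normalize $f$ so that $\min_v f(v) = 0$ (adding a constant to $f$ does not change $\Delta f$, as $\Delta$ annihilates constant functions). Then I would stratify the vertex set by the level sets $A_i \coloneqq \{v \in V(G) : f(v) \geq i\}$ for $i = 1, 2, \ldots, M$ where $M = \max_v f(v)$, and observe that $\Delta f = \sum_{i=1}^{M} \Delta \mathbbm{1}_{A_i}$. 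This expresses the passage from $D$ to $D'$ as a composition of subset-firing moves from the nested sets $A_M \subseteq A_{M-1} \subseteq \cdots \subseteq A_1$.

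The heart of the argument is to choose the \emph{order} of these firing moves so that debt is never induced along the way. The natural order is to fire the sets from the ``top down'': first fire $A_M$ (the highest level set), then $A_{M-1}$, and so on down to $A_1$. Intuitively, firing the highest level set first only moves chips downward along the $f$-gradient, and since the final divisor $D'$ is effective, one should be able to show that every intermediate configuration stays effective provided we also exploit the freedom to reorder. However, the top-down order alone need not preserve effectivity at every step, so the real content is a more careful inductive scheme. Here I would invoke the structure afforded by Dhar's burning algorithm: given the source divisor $D$ and the target $D'$, I would argue by induction on the total ``firing distance'' $\sum_v f(v)$ that there always exists \emph{some} vertex or subset we can fire first while remaining effective, reducing to a strictly smaller instance.

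More concretely, I expect the cleanest route is to reduce to the case where both $D$ and $D'$ are compared to a common $q$-reduced divisor. Since linear equivalence classes contain a unique $q$-reduced representative for each base vertex $q$, and since the reduction process itself is realized by a sequence of legal set-firings (by Dhar's algorithm, which repeatedly fires the complement of a ``burned'' set, each such firing being a legal move that preserves effectivity), I can transform $D$ into its $q$-reduced form $D_q$ through debt-free subset-firing moves. The same applies to $D'$. Because $D \sim D'$, their $q$-reduced forms coincide: $D_q = D'_q$. Reversing the sequence from $D'$ to $D'_q$ then yields a debt-free path from $D_q$ back to $D'$, and concatenating gives the desired sequence $D \to D_q = D'_q \to D'$. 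The key point making each leg debt-free is that Dhar's burning algorithm always fires a set whose complement receives at least as many chips as are sent out from each boundary vertex, so effectivity is preserved throughout.

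The main obstacle will be verifying rigorously that each individual firing move in the reduction process is genuinely debt-free, i.e. that the vertices being fired do not go into debt. This requires a careful treatment of the burning algorithm: one must confirm that the set fired at each stage is exactly the complement of the burned region, that every vertex in this set has outdegree (into the burned region) not exceeding its current chip count, and that the process terminates. I would handle this by carefully citing the invariants of Dhar's algorithm from \cite{bn} or \cite{baker} and checking that the sequence of complements-of-burned-sets forms a valid debt-free firing sequence. The subtlety lies in the fact that ``firing a subset'' decreases chips on boundary vertices, so I must track the outdegree inequality at each vertex of the fired set and confirm it is met precisely because the divisor is effective and the burned set was built up greedily; I expect this bookkeeping, rather than any deep idea, to be where the care is needed.
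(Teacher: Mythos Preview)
The paper does not prove this lemma; it simply cites \cite{db}. Your proposed route through $q$-reduced divisors is correct: each step of Dhar's burning algorithm fires the unburnt set, and an unburnt vertex $v$ by definition has at least as many chips as incident burning edges, which is exactly $\text{outdeg}_v$ of the unburnt set, so every such fire is debt-free. The reversal step also works: if $C$ and $C'=C-\Delta\mathbbm{1}_A$ are both effective, then for $v\notin A$ one has $C'(v)=C(v)+|E(v,A)|\geq |E(v,A)|=\text{outdeg}_v(V\setminus A)$, so firing $V\setminus A$ from $C'$ is debt-free and returns $C$. Concatenating $D\to D_q=D'_q\to D'$ therefore gives a valid debt-free sequence.

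That said, you abandoned the level-set approach too hastily; it \emph{does} work top-down and is the standard (and shorter) argument, essentially what appears in \cite{db}. With $f\geq 0$, $M=\max f$, and $A_M=\{v:f(v)=M\}$, note that for $v\in A_M$,
\[
(\Delta f)(v)=\sum_{w\notin A_M}|E(v,w)|\bigl(M-f(w)\bigr)\ \geq\ \sum_{w\notin A_M}|E(v,w)|=\text{outdeg}_v(A_M),
\]
so $D(v)-\text{outdeg}_v(A_M)\geq D(v)-(\Delta f)(v)=D'(v)\geq 0$. Hence firing $A_M$ from $D$ stays effective; replacing $f$ by $f-\mathbbm{1}_{A_M}$ (which still satisfies $f\geq 0$ and now has maximum $M-1$) and inducting yields the debt-free sequence $A_M,A_{M-1},\ldots,A_1$ directly, with no need for reduced divisors or reversing.
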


This means that if we have a divisor $D$ with $r(D) \geq 1$, then we can move at least one chip onto every vertex of our graph (in turn) without ever putting any of the vertices of the graph into debt. For a given divisor $D$, we say $D$ is \textit{$v$-reduced} with respect to some vertex $v \in V(G)$ if
\begin{enumerate}
    \item for each $v' \in V(G) - \{v\}$, $D(v') \geq 0$, and
    \item for any nonempty subset $A \subset V(G) - \{v\}$, there exists $v' \in A$ such that $\text{outdeg}_{v'}(A) < D(v')$.
\end{enumerate}
This means that every vertex (except possibly $v$) is out of debt, and that there exists no way to fire from any subset of $V(G) - \{v\}$ without inducing debt. The following two results are proven in \cite{bn}:

\begin{lemma}
\label{lemma:uniquevred}
Given a divisor $D \in \text{Div}(G)$ and a vertex $v \in V(G)$, there exists a unique $v$-reduced divisor $D'$ such that $D' \sim D$. 
\end{lemma}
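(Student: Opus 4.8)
The plan is to prove the two halves separately: \emph{uniqueness} by an extremal argument on the level sets of the potential function realizing the linear equivalence, and \emph{existence} by first replacing $D$ with an equivalent divisor that is effective away from $v$, then running Dhar's burning algorithm from $v$ and firing the unburnt set until the whole graph burns.

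For uniqueness, I would suppose $D_1$ and $D_2$ are both $v$-reduced with $D_1\sim D_2$ and write $D_2-D_1=\Delta f$ for an integer-valued function $f$ on $V(G)$, which is determined up to an additive constant. If $D_1\neq D_2$ then $f$ is non-constant, so $A^-=\{w:f(w)=\min f\}$ and $A^+=\{w:f(w)=\max f\}$ are disjoint, nonempty, proper subsets of $V(G)$, and in particular $v$ lies in at most one of them. If $v\notin A^-$, then for $w\in A^-$ every neighbor $w'$ satisfies $f(w')\ge f(w)$, with $f(w')-f(w)\ge 1$ whenever $w'\notin A^-$, so
\[
(\Delta f)(w)=\sum_{w'\neq w}|E(w,w')|\bigl(f(w)-f(w')\bigr)\le -\,\text{outdeg}_w(A^-).
\]
Since $D_1=D_2-\Delta f$ and $D_2(w)\ge 0$ (as $w\neq v$), this yields $D_1(w)\ge \text{outdeg}_w(A^-)$ for every $w\in A^-$, so $A^-$ is a nonempty subset of $V(G)-\{v\}$ that can be fired from $D_1$ without inducing debt — contradicting that $D_1$ is $v$-reduced. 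If instead $v\in A^-$, then $v\notin A^+$ and the same computation applied to $-f$ and $A^+$ shows $A^+$ can be fired from $D_2$ without debt, again a contradiction. Hence $f$ is constant and $D_1=D_2$.

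For existence, I would first produce $D_0\sim D$ with $D_0(w)\ge 0$ for all $w\neq v$: the classes $[(w)-(v)]$ with $w\neq v$ generate the finite abelian group $\text{Jac}(G)=\text{Div}^0(G)/\text{Prin}(G)$, and in a finite abelian group the submonoid generated by a generating set is the whole group (if $g$ has order $n$ then $-g=(n-1)g$), so $D-(\deg D)(v)$ is equivalent to $\sum_{w\neq v}a_w\bigl((w)-(v)\bigr)$ with each $a_w\in\mathbb{Z}_{\ge 0}$, which rearranges to $D_0$. Then, starting from $D_0$, I would iterate the following move: burn $v$, let a vertex $w\neq v$ catch fire once the number of its edges to the burnt set exceeds its current number of chips, and if the unburnt set $A$ is nonempty, fire $A$. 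Each such firing is legal, since a vertex $w\in A$ has $\text{outdeg}_w(A)$ edges to the burnt set $V(G)-A$ and hence at least that many chips, and it preserves effectivity away from $v$; since there is no infinite sequence of legal firings of subsets of $V(G)-\{v\}$ starting from a divisor effective away from $v$, the process terminates, and it terminates exactly when the whole graph burns, i.e. when no nonempty subset of $V(G)-\{v\}$ can be fired without debt — which together with effectivity away from $v$ is precisely the definition of a $v$-reduced divisor equivalent to $D$.

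The main obstacle I anticipate is the termination of this iterated burning process, equivalently the finiteness of legal firing sequences of non-sink subsets; this is the one genuinely non-elementary ingredient and is exactly what is established in \cite{bn} (see also \cite{db}). By contrast, the uniqueness half is a clean extremal argument, and the reduction to a divisor effective away from $v$ is routine once the finite-group monoid observation is in hand.
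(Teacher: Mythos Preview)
The paper does not actually prove this lemma; immediately before stating it, the paper says ``The following two results are proven in \cite{bn}'' and gives no argument of its own. So there is no in-paper proof to compare against.

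Your argument is correct and is essentially the standard Baker--Norine proof. The uniqueness half---writing $D_2-D_1=\Delta f$ and firing the extremal level set $A^\pm$ of $f$ to contradict $v$-reducedness---is exactly the argument in \cite{bn}. For existence, your two-step approach (first pass to an equivalent divisor effective away from $v$, then iterate Dhar's burning process until the whole graph burns) is a common variant; the termination issue you correctly isolate is precisely the content of the correctness of Algorithm~\ref{algo:dhars}, which the paper also outsources, to \cite{bs}. Your route to a divisor effective away from $v$ via the observation that in the finite group $\text{Jac}(G)$ the elements $[(w)-(v)]$ generate as a monoid is a nice touch, and slightly slicker than the usual ``repeatedly fire $V(G)-\{w\}$ for each $w$ in debt'' maneuver.
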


We will use $\text{Red}_{v}(D)$ to denote this unique $v$-reduced divisor. 

\begin{lemma}
\label{lemma:vredrank}
For a divisor $D \in \text{Div}(G)$, $r(D) \geq 1$ if and only if $\text{Red}_v(D)(v) \geq 1$ for each $v \in V(G)$. 
\end{lemma}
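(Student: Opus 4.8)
The plan is to deduce the rank-one statement from the standard rank-zero characterization of $v$-reduced divisors together with a single subtraction identity. First I would unwind the definition of rank: since the only effective divisors of degree $1$ are the single vertices $(w)$ with $w \in V(G)$, we have $\text{Div}_+^1(G) = \{(w) : w \in V(G)\}$, so by the definition of rank, $r(D) \geq 1$ if and only if $|D - (w)| \neq \varnothing$ for every $w \in V(G)$. Thus it suffices to show, for each fixed vertex $w$, that $|D - (w)| \neq \varnothing$ is equivalent to $\text{Red}_w(D)(w) \geq 1$; the full lemma is the conjunction of these equivalences over all $w$.

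The key structural observation is the identity $\text{Red}_w(D - (w)) = \text{Red}_w(D) - (w)$. To prove it, write $D' := \text{Red}_w(D)$ and verify that $D' - (w)$ satisfies the two defining conditions of being $w$-reduced. Condition (1) holds because for every $u \neq w$ we have $(D' - (w))(u) = D'(u) \geq 0$. Condition (2) holds because, for any nonempty $A \subseteq V(G) - \{w\}$, the witnessing vertex $u \in A$ guaranteed by the $w$-reducedness of $D'$ satisfies $u \neq w$, hence $\text{outdeg}_u(A) < D'(u) = (D' - (w))(u)$. Since $D' - (w) \sim D - (w)$ and $v$-reduced representatives are unique (Lemma \ref{lemma:uniquevred}), the identity follows, and in particular $\text{Red}_w(D - (w))(w) = \text{Red}_w(D)(w) - 1$.

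Next I would invoke the rank-zero characterization: for any divisor $E$ and any vertex $w$, $|E| \neq \varnothing$ if and only if $\text{Red}_w(E)(w) \geq 0$. One direction is immediate, since a $w$-reduced divisor is effective away from $w$ by condition (1), so if additionally $\text{Red}_w(E)(w) \geq 0$ then $\text{Red}_w(E)$ is a fully effective divisor linearly equivalent to $E$, whence $|E| \neq \varnothing$. The reverse implication—that $|E| \neq \varnothing$ forces $\text{Red}_w(E)$ to be effective—is where the real work lies, and I expect it to be the main obstacle. The cleanest route is Dhar's burning algorithm (implicit in \cite{bn}): starting from any effective representative of $E$, one repeatedly fires the maximal subset not containing $w$ that can be fired without inducing debt, a process that preserves effectivity at every step and terminates exactly at $\text{Red}_w(E)$.

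Finally I would chain the equivalences. For each vertex $w$, applying the rank-zero characterization to $E = D - (w)$ and then the subtraction identity gives
\[
|D - (w)| \neq \varnothing \iff \text{Red}_w(D - (w))(w) \geq 0 \iff \text{Red}_w(D)(w) - 1 \geq 0 \iff \text{Red}_w(D)(w) \geq 1.
\]
Combining this with the reformulation of $r(D) \geq 1$ from the first step yields that $r(D) \geq 1$ holds if and only if $\text{Red}_w(D)(w) \geq 1$ for all $w$, as desired. I note that the substantive ingredient (the effectivity half of the rank-zero characterization, via the burning algorithm) is needed only for the forward direction: the backward direction of the lemma avoids it entirely, since $\text{Red}_w(D)(w) \geq 1$ directly exhibits $\text{Red}_w(D) - (w)$ as an effective divisor linearly equivalent to $D - (w)$.
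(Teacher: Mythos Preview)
Your argument is correct. The paper itself does not supply a proof of this lemma; it is stated with a citation to \cite{bn}, so there is no in-paper proof to compare against. What you have written is precisely the standard Baker--Norine argument: reformulate $r(D)\geq 1$ as $|D-(w)|\neq\varnothing$ for every $w$, establish the subtraction identity $\text{Red}_w(D-(w))=\text{Red}_w(D)-(w)$ (valid because subtracting a chip at $w$ leaves both defining conditions of $w$-reducedness untouched, as the witnessing vertex in condition~(2) always lies in $A\subseteq V(G)-\{w\}$), and then apply the rank-zero criterion $|E|\neq\varnothing\iff\text{Red}_w(E)(w)\geq 0$, whose nontrivial direction is exactly the effectivity-preserving reduction process you describe. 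There is nothing to correct or contrast.
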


Thus, we can determine if a divisor is winning divisor by checking that, for each $v \in V(G)$, the associated $v$-reduced divisor satisfies $v \in \text{supp}(\text{Red}_v(D))$. Furthermore, given a divisor $D$ and a vertex $v$ for which $D$ is effective away from $v$, Algorithm \ref{algo:dhars}, developed by Dhar in \cite{dhar}, computes $\text{Red}_v(D)$.

\begin{algorithm}
	\KwData{Graph $G$, vertex $v \in V(G)$, and divisor $D \in \text{Div}(G)$, $D(v') \geq 0$ for $v' \neq v$}
	\KwResult{$\text{Red}_v(D)$}
	$V' \coloneqq \{ v \}$, $E' \coloneqq \{ e \in E(G) : v \in e \}$\;
	\While{$V' \neq V(G)$} {
	    \If{$D(v') < |\{e \in E' : v' \in e\}|$ for some $v' \in V(G)$} {
	    $V' = V' + \{ v'\}$, $E' = E' + \{ e \in E(G) : v' \in e\}$\;
	    }
	    \Else {
	        return $\textbf{Alg}(G, v, D - \Delta \mathbbm{1}_{V(G) - V'})$\;
	    }
	}
	return $D$\;
\caption{Dhar's Burning Algorithm}
\label{algo:dhars}
\end{algorithm}

We offer the following intuitive explanation of Algorithm \ref{algo:dhars}. We begin with a graph $G$, a vertex $v$, and a divisor $D$, which is assumed to be effective away from $v$. Then we ``start a fire'' at the vertex $v$. 
As the fire spreads through the graph, chips on vertices act as ``firefighters'', protecting their vertex from the encroaching flames. To determine which vertices and edges of the graph catch on fire, we repeat the following two steps until no new vertices or edges burn.
\begin{enumerate}
    \item If an edge is adjacent to a burning vertex, then that edge also catches fire and begins to burn.
    \item If a vertex is adjacent to more burning edges than it has chips, then that vertex begins to burn.
\end{enumerate}
Once a stable state is reached, we chip-fire from the set of unburnt vertices. Then we begin the burning process again starting at $v$. If at any point the entire graph burns, the algorithm terminates and outputs the resulting divisor.

We refer the reader to \cite{bs} for a proof that Algorithm \ref{algo:dhars} terminates and that the resulting divisor is indeed $\text{Red}_v(D)$. As a corollary of Lemma \ref{lemma:vredrank}, we have the following result.

\begin{corollary}
\label{lemma:dhars}
For an effective divisor $D \in \text{Div}_+(G)$, if there exists some $v \in V(G)$ such that $v \notin \text{supp}(D)$ and for which beginning Dhar's burning algorithm at $v$ results in the entire graph burning, then $r(D) < 1$.
\end{corollary}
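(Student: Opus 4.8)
The plan is to reduce the statement to Lemma \ref{lemma:vredrank} by showing that the hypothesis forces $D$ itself to be the $v$-reduced divisor in its linear equivalence class, so that the missing chip at $v$ directly witnesses $r(D)<1$.

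First I would record the easy bookkeeping: since $D$ is effective and $v\notin\text{supp}(D)$, we have $D(v)=0$; in particular $D$ is effective away from $v$, so it is a legal input to Algorithm \ref{algo:dhars} started at $v$. The central claim is that if the burning phase started at $v$ consumes all of $V(G)$, then $\text{Red}_v(D)=D$. One way to see this is to track Algorithm \ref{algo:dhars} directly: a chip-firing move (the \textbf{else} branch, replacing $D$ by $D-\Delta\mathbbm{1}_{V(G)-V'}$) is performed only when the fire stalls with $V'\neq V(G)$. If instead the fire spreads until $V'=V(G)$, the \textbf{while} loop exits immediately and the algorithm returns $D$ unchanged. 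By the correctness of Dhar's burning algorithm (\cite{bs}), this output is $\text{Red}_v(D)$, whence $\text{Red}_v(D)=D$.

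For a self-contained version of that step I would instead verify the defining $v$-reduced condition for $D$ directly, using monotonicity of the burning process. Suppose toward a contradiction that some nonempty $A\subseteq V(G)-\{v\}$ satisfied $\text{outdeg}_w(A)\leq D(w)$ for every $w\in A$. I claim no vertex of $A$ can ever ignite: since burnt vertices only accumulate, at the moment a vertex $w\in A$ first becomes eligible to burn its burnt incident edges all lead into $V(G)-A$, hence number at most $\text{outdeg}_w(A)\leq D(w)$, which is too few chips to ignite $w$. By induction $A$ remains unburnt, contradicting that all of $G$ burns. Thus no such $A$ exists, which is exactly the condition that $D$ is $v$-reduced, again giving $\text{Red}_v(D)=D$.

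Either route concludes the same way: $\text{Red}_v(D)(v)=D(v)=0<1$, so it is \emph{not} the case that $\text{Red}_u(D)(u)\geq 1$ for every $u\in V(G)$, and Lemma \ref{lemma:vredrank} then yields $r(D)<1$. The only genuinely delicate point is the translation between ``the whole graph burns'' and ``$D$ is $v$-reduced''; everything else is immediate bookkeeping. I expect that translation to be the main obstacle, and I would handle it either by citing the correctness of Algorithm \ref{algo:dhars} or by the monotonicity argument above, whichever the surrounding exposition prefers.
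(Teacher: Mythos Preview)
Your proposal is correct and follows exactly the approach the paper intends: the paper does not give a proof at all, merely stating that the result is ``a corollary of Lemma~\ref{lemma:vredrank},'' and you have supplied precisely the missing link---that the burning of the whole graph means $D$ is already $v$-reduced, so $\text{Red}_v(D)(v)=D(v)=0$. Both of your routes (citing correctness of Algorithm~\ref{algo:dhars}, or the direct monotonicity argument) are valid ways to justify that link.
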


\subsection{Riemann-Roch for Graphs}

For a graph $G$, we define the \textit{canonical divisor} as
\[
K \coloneqq \sum_{v \in V(G)} (\text{val}(v) - 2) (v).
\]
The canonical divisor has degree $2g(G) - 2$. In \cite{bn}, Baker and Norine prove the following Riemann-Roch theorem for graphs, analogous to the classical Riemann-Roch theorem on algebraic curves:

\begin{theorem}[Riemann-Roch for graphs]
\label{thm:riemannroch}
If $G$ is a graph with $D \in \text{Div}(G)$, 
\[
r(D) - r(K-D) = \text{deg}(D) + 1 - g(G).
\]
\end{theorem}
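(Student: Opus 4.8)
The plan is to follow the approach of Baker and Norine \cite{bn} and reduce the identity to a structural understanding of the \emph{maximal non-special} divisors of $G$. Call a divisor $N$ maximal non-special if $r(N)=-1$ while $r(N+(v))\geq 0$ for every $v\in V(G)$, and write $\mathcal N$ for the set of all such divisors. The engine of the proof is an abstract ``Riemann--Roch criterion'': the equality $r(D)-r(K-D)=\deg(D)+1-g(G)$ holds for all $D$ as soon as (a) every element of $\mathcal N$ has degree exactly $g(G)-1$, and (b) the map $D\mapsto K-D$ sends $\mathcal N$ into itself. I would first establish this criterion as a formal statement about the rank function. Using Lemma~\ref{lemma:finiteseq} and Lemma~\ref{lemma:vredrank}, one shows that $r(D)<0$ precisely when $N-D$ is equivalent to an effective divisor for some $N\in\mathcal N$ (every non-special $D$ can be enlarged, one chip at a time, to a maximal non-special divisor, and any divisor dominating an effective one has nonnegative rank). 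This lets one write $r(D)+1=\min\{\deg F: F\geq 0,\ D-F\text{ is dominated by some }N\in\mathcal N\}$, and conditions (a) and (b) then force the two sides of the Riemann--Roch identity to match.

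The combinatorial heart of the argument is an explicit description of $\mathcal N$ in terms of acyclic orientations of $G$. Given an acyclic orientation $O$ with a unique source $q$, set $\nu_O:=\sum_{v\in V(G)}(\mathrm{indeg}_O(v)-1)(v)$, which has degree $|E(G)|-|V(G)|=g(G)-1$. I would show $\nu_O\in\mathcal N$ as follows: for every nonempty $A\subseteq V(G)-\{q\}$, acyclicity of $O$ guarantees a vertex $w\in A$ that is a source within $A$, and since every edge oriented into $w$ then comes from outside $A$ we get $\mathrm{outdeg}_w(A)\geq\mathrm{indeg}_O(w)>\nu_O(w)$; together with $\nu_O(v)\geq 0$ for $v\neq q$ this shows $\nu_O$ is $q$-reduced with $\nu_O(q)=-1$, so $r(\nu_O)=-1$ by Lemma~\ref{lemma:vredrank}, and maximality follows from a short application of Dhar's algorithm (Corollary~\ref{lemma:dhars}). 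Conversely, I would recover such an orientation from an arbitrary maximal non-special class by running Dhar's burning algorithm from a base vertex and orienting each edge according to the stage at which its endpoints burn. This gives condition (a). For condition (b), reversing every edge of $O$ yields another acyclic orientation $\overline O$ with $\mathrm{indeg}_{\overline O}(v)=\mathrm{val}(v)-\mathrm{indeg}_O(v)$, so $\nu_{\overline O}=\sum_v(\mathrm{val}(v)-\mathrm{indeg}_O(v)-1)(v)=K-\nu_O$; as $\overline O$ is again acyclic, $K-\nu_O$ lies in $\mathcal N$.

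Combining the criterion with the characterization then yields the Riemann--Roch identity for every $D$ at once. I expect the main obstacle to be the converse half of the characterization of $\mathcal N$ --- that every maximal non-special divisor genuinely arises from an acyclic orientation. The forward direction (each $\nu_O$ is maximal non-special) and the symmetry $K-\nu_O=\nu_{\overline O}$ are essentially bookkeeping, but extracting a well-defined acyclic orientation from an arbitrary maximal non-special class requires reading global orientation data out of the local stopping behaviour of Dhar's burning algorithm and checking that the resulting orientation is genuinely acyclic with the required source. This is the delicate step, and it is exactly where the precise properties of reduced divisors (Lemmas~\ref{lemma:uniquevred} and \ref{lemma:vredrank}) and of the burning process (Corollary~\ref{lemma:dhars}) carry the argument.
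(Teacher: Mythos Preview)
The paper does not actually prove Theorem~\ref{thm:riemannroch}; it merely states the result and attributes it to Baker and Norine~\cite{bn}, using it as a black box in the subsequent Proposition~\ref{prop:genus2}. So there is no ``paper's own proof'' to compare against.

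That said, your proposal is a faithful outline of the Baker--Norine argument itself: the reduction to the two structural conditions on the set $\mathcal N$ of maximal non-special divisors, the realization of $\mathcal N$ via acyclic orientations with a unique source, and the involution $O\mapsto\overline O$ giving $K-\nu_O=\nu_{\overline O}$ are exactly the ingredients of the original proof. Your identification of the delicate step (showing every maximal non-special class arises from an acyclic orientation) is also accurate. One small caution: the reversed orientation $\overline O$ has a unique \emph{sink} rather than a unique source, so to conclude $\nu_{\overline O}\in\mathcal N$ you either need to check that the characterization of $\mathcal N$ is basepoint-independent up to linear equivalence, or argue directly that $r(K-\nu_O)=-1$ and $K-\nu_O$ is maximal. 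This is handled in \cite{bn} but is worth flagging in your write-up.
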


Notice that this implies $r(K) = g(G) - 1$. As a consequence, we can prove the following result:

\begin{proposition}
\label{prop:genus2}
If $G$ is a graph with genus $g(G) \leq 2$, then $\text{gon}(G) \leq 2$.
\end{proposition}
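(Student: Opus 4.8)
The plan is to prove the proposition by splitting on the genus and invoking Riemann-Roch (Theorem~\ref{thm:riemannroch}) together with the observation recorded just after it that $r(K) = g(G) - 1$. In each case the goal is the same: exhibit an effective divisor $D$ with $\deg(D) = 2$ and $r(D) \ge 1$, which forces $\text{gon}(G) \le 2$.

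First I would handle the case $g(G) \le 1$. Here I fix an arbitrary vertex $v$ and take $D = 2(v)$, which is effective of degree $2$. Since $\deg(K - D) = (2g(G) - 2) - 2 = 2g(G) - 4 \le -2 < 0$, the linear system $|K - D|$ is empty and $r(K - D) = -1$. Riemann-Roch then yields $r(D) = \deg(D) + 1 - g(G) + r(K - D) = 2 - g(G) \ge 1$, so $D$ has the required properties and $\text{gon}(G)\le 2$.

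Next I would handle the case $g(G) = 2$. Now $\deg(K) = 2g(G) - 2 = 2$ and $r(K) = g(G) - 1 = 1 \ge 0$, so $|K|$ is nonempty; I pick any effective $D \in |K|$. Since the rank of a divisor depends only on its linear equivalence class, $r(D) = r(K) = 1$, while $\deg(D) = \deg(K) = 2$; thus $D$ again witnesses $\text{gon}(G) \le 2$. Together with the previous case this covers all graphs with $g(G) \le 2$.

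I expect the only place that needs care is the $g(G) = 2$ case: one is tempted to take $D = K$ outright, but $K = \sum_{v \in V(G)} (\text{val}(v) - 2)(v)$ fails to be effective as soon as $G$ has a vertex of valence $1$, so the argument must instead pass through the inequality $r(K) \ge 0$ to extract an honestly effective representative of $|K|$. (Alternatively one could observe that attaching trees does not change gonality and reduce to the $2$-edge-connected case, where $K$ itself is effective, but the Riemann-Roch route is cleaner and self-contained.) Everything else is routine bookkeeping with degrees and the convention that empty linear systems have rank $-1$.
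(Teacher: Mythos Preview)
Your proof is correct and follows essentially the same Riemann-Roch strategy as the paper. The only differences are cosmetic: you merge the $g=0$ and $g=1$ cases into a single computation with $D=2(v)$, and you are more careful than the paper in the $g=2$ case by passing from $K$ to an effective representative of $|K|$ (the paper simply cites $K$ itself without addressing effectiveness).
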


\begin{proof}
If $g(G) = 0$, then $G$ must be a tree, giving $\text{gon}(G) = 1$. If $g(G) = 1$ and $D \in \text{Div}(G)$ satisfies $\text{deg}(D) = 2$, then by Riemann-Roch for graphs, we see that
\begin{align*}
    r(D) &= \text{deg}(D) + 1 - g + r(K - D) \\
    &= 2 + r(K - D) = 2+(-1)=1,
\end{align*}
where $r(K-D)=-1$ since $\deg(K-D)<0$.
Finally, if $g(G) = 2$, then the canonical divisor $K$ has $\text{deg}(K) = 2$ and $r(D) = 1$, providing an upper bound on the gonality of $G$.
\end{proof}

\subsection{Harmonic Morphisms of Graphs}

We now turn to another notion of gonality called \textit{geometric gonality}, which is defined in terms of maps between graphs. If $G$ and $G'$ are graphs, a \textit{morphism} $\varphi : G \to G'$ is a map sending $V(G) \to V(G')$ and $E(G) \to E(G') \cup V(G')$, satisfying the following two conditions:

\begin{enumerate}
    \item if $e = uv \in E(G)$ and $\varphi(u) = \varphi(v)$, then $\varphi(e) = \varphi(u) = \varphi(v)$
    \item if $e = uv \in E(G)$ and $\varphi(u) \neq \varphi(v)$, then $\varphi(e) = \varphi(u)\varphi(v)$.
\end{enumerate}

This definition comes from \cite{bn2}. Morphisms defined on graphs are sometimes \emph{indexed}, as in \cite{liyau}. In this paper, we will only consider non-indexed morphisms. For a vertex $v \in V(G)$, we define the \textit{multiplicity} of $\varphi$ at $v$ with respect to an edge $e' \ni \varphi(v)$ as
\[
m_\varphi(v) \coloneqq | \{ e \in E(G) : v \in e, \varphi(e) = e' \} |,
\]
for some choice of $e' \in E(G')$ incident with $\varphi(v)$. A morphism is \textit{harmonic} if the value of $m_\varphi(v)$ does not depend on the choice of $e' \in E(G')$. A harmonic morphism is \textit{non-degenerate} if $m_\varphi(v) > 0$ for all $v \in V(G)$. We define the \textit{degree} of a harmonic morphism to be
\[
\text{deg}(\varphi) \coloneqq | \{ e \in E(G) : \varphi(e) = e' \} | = |\varphi^{-1}(e')|,
\]
for some choice of $e' \in E(G')$. The degree of a harmonic morphism is well-defined and independent of the choice of $e'$ \cite[Lemma 2.4]{bn2}. Figure \ref{fig:morphismex} depicts an example of a non-degenerate harmonic morphism. Notice that for each edge $e \in T = \varphi(G)$, we have $|\varphi^{-1}(e)| = 3$.

\begin{figure}
    \centering
    \includegraphics[scale=0.2]{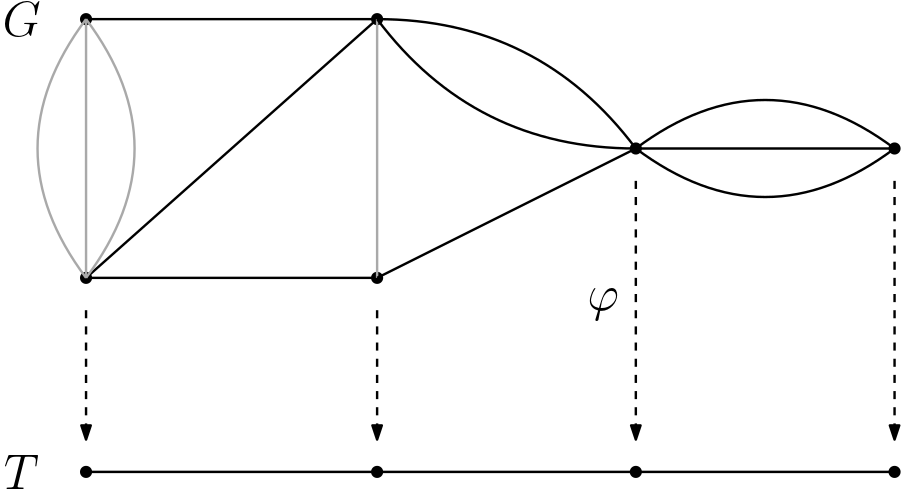}
    \caption{A non-degenerate harmonic morphism of degree $3$ from $G \to T$}
    \label{fig:morphismex}
\end{figure}

We define the \textit{geometric gonality} of a graph $G$ to be
\[
\text{ggon}(G) \coloneqq \min \{ \text{deg}(\varphi) : \varphi : G \to T \; \text{is a non-degenerate harmonic morphism onto a tree} \; T \}.
\]

We remark that there are multiple inequivalent notions of geometric gonality defined in the literature. In particular, some authors consider \textit{refinements} of the original graph \cite{cd}, while other authors only consider graph morphisms that are also \textit{homomorphisms} \cite[Section 1.3]{vdw}. The results in our paper hold specifically for the definition of geometric gonality given above.

\subsection{Bounds on Gonality}

The following result is stated in \cite{liyau} and proven here for the reader's convenience.

\begin{lemma}
\label{lemma:eta}
For a graph $G$, $\text{gon}(G) \geq \min \{|V(G)|, \eta(G)\}$.
\end{lemma}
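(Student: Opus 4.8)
Let $D$ be an effective divisor with $r(D)\geq 1$ and $\text{deg}(D)=\text{gon}(G)$; the goal is to show $\text{deg}(D)\geq \min\{|V(G)|,\eta(G)\}$. If $\text{supp}(D)=V(G)$, then $D(v)\geq 1$ for every vertex, so $\text{deg}(D)\geq |V(G)|$ and we are done. So we may assume there is a vertex $q$ with $D(q)=0$, i.e.\ $q\notin\text{supp}(D)$. (Note this case already absorbs the degenerate possibilities: if $|V(G)|=1$, or if $D=0$, then $r(D)\leq 0$, so $r(D)\geq 1$ forces $\text{supp}(D)=V(G)$.)

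Next I would run Dhar's burning algorithm (Algorithm~\ref{algo:dhars}) starting at $q$, and look at the set $A$ of vertices that remain unburnt after the first burning phase. Since $q$ is ignited at the outset, $q\notin A$, so $A\subsetneq V(G)$. If $A=\varnothing$, then the entire graph burns, and Corollary~\ref{lemma:dhars} gives $r(D)<1$, contradicting our assumption; hence $A\neq\varnothing$, so $A$ is a nonempty proper subset of $V(G)$. The key bookkeeping point is that in the stable configuration every edge from $A$ to $V(G)\setminus A$ is burning (it is incident to a burnt vertex) while every edge inside $A$ is not, so the number of burning edges at a vertex $v\in A$ is exactly $\text{outdeg}_v(A)$; and $v$ surviving the fire means precisely that this number does not exceed $D(v)$. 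Summing over $v\in A$ and using that $D$ is effective,
\[
|E(A,V(G)\setminus A)| \;=\; \text{outdeg}_A(A) \;=\; \sum_{v\in A}\text{outdeg}_v(A) \;\leq\; \sum_{v\in A}D(v) \;\leq\; \text{deg}(D).
\]

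To finish I would invoke the definition of edge-connectivity: $G$ is connected and $\varnothing\neq A\subsetneq V(G)$, so removing the cut $E(A,V(G)\setminus A)$ disconnects $G$; since $G$ is $\eta(G)$-edge-connected, this cut must have at least $\eta(G)$ edges, i.e.\ $|E(A,V(G)\setminus A)|\geq\eta(G)$. Combined with the displayed inequality, $\text{deg}(D)\geq\eta(G)\geq\min\{|V(G)|,\eta(G)\}$, as desired. I do not expect a real obstacle here: the proof is short, and the only point requiring care is the translation of ``$v\in A$ is unburnt in the stable state'' into the clean inequality $\text{outdeg}_v(A)\leq D(v)$ for every $v\in A$ (so that the sum telescopes to the size of the edge cut), together with the observation that the emptiness of $A$ is exactly the hypothesis of Corollary~\ref{lemma:dhars}.
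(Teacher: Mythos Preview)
Your proof is correct. The underlying idea matches the paper's---both hinge on comparing $\deg(D)$ to the size of an edge cut, using that any nonempty proper subset $A\subsetneq V(G)$ has $\text{outdeg}_A(A)\geq\eta(G)$---but the executions differ. You argue directly: given a winning $D$ missing some vertex $q$, you invoke Dhar's burning algorithm and Corollary~\ref{lemma:dhars} to produce a nonempty unburnt set $A$, then read off $\text{outdeg}_A(A)\leq\deg(D)$ from the survival condition, yielding $\eta(G)\leq\deg(D)$. The paper instead argues by contradiction and avoids Dhar's algorithm entirely: if $\deg(D)<\min\{|V(G)|,\eta(G)\}$, then some vertex lies outside $\text{supp}(D)$, and \emph{every} nonempty proper subset $A$ satisfies $\text{outdeg}_A(A)\geq\eta(G)>\deg(D)\geq\sum_{v\in A}D(v)$, so no subset can be fired without debt; hence $D$ is already reduced at the missing vertex and cannot be winning. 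The paper's version is shorter and uses less machinery, while yours is more constructive in that it actually exhibits the small cut via the burning process.
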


\begin{proof}
Suppose that $D \in \text{Div}_+(G)$ is a divisor with $\text{deg}(D) < \min \{ |V(G)|, \eta(G) \}$. This means that $D$ does not contain all of the vertices of $G$ in its support, nor can we fire from any subset of $\text{supp}(D)$ because any such subset $A \subseteq \text{supp}(D)$ will have $\text{outdeg}_A(A) > \sum_{v \in A} D(v)$. Hence, $D$ is not a winning divisor.
\end{proof}

The \textit{treewidth} $\text{tw}(G)$ of a graph $G$ is defined to be the minimum width amongst all possible tree decompositions of $G$. The following result is proven in \cite{dbg}.

\begin{lemma}
\label{ref:treewidth}
For a graph $G$, $\text{gon}(G) \geq \text{tw}(G)$.
\end{lemma}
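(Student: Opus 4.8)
The plan is to reprove the lower bound of van Dobben de Bruyn and Gijswijt \cite{dbg} via the bramble characterization of treewidth. By the Seymour--Thomas duality, $\text{tw}(G)+1$ equals the maximum \emph{order} of a bramble of $G$ --- where a bramble $\mathcal{B}$ is a family of connected vertex subsets that pairwise touch (meet or are joined by an edge) and its order is the least number of vertices meeting every member of $\mathcal{B}$ --- so it suffices to fix a bramble $\mathcal{B}$ of order $m$ and show $\text{gon}(G)\ge m-1$. I would argue by contradiction: suppose $D$ is effective with $r(D)\ge 1$ and $\deg(D)=d\le m-2$, and produce a set of fewer than $m$ vertices meeting every member of $\mathcal{B}$.

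Since $|\text{supp}(D)|\le d<m-1$, some member $B_0\in\mathcal{B}$ avoids $\text{supp}(D)$; and since $r(D)\ge 1$, Lemma \ref{lemma:vredrank} lets me force a chip onto any chosen vertex, in particular into $B_0$. The danger is that doing so may leave a different member uncovered, so a naive back-and-forth loops; the fix is to pass to an extremal representative. Among all effective $D'\sim D$ --- a set reachable by legal subset-firings, by Lemma \ref{lemma:finiteseq} --- I would select one minimizing a suitable potential measuring how far $\text{supp}(D')$ is from covering $\mathcal{B}$, and then show that if any member is still missed, running Dhar's burning algorithm (Algorithm \ref{algo:dhars}) from a vertex of that member and firing the unburnt set strictly decreases the potential: here connectivity of the blobs and the pairwise-touching property are exactly what force the fired cut to lie ``between'' the support and the missed member. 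Choosing the right potential is delicate --- summing distances to all members can increase elsewhere, so one likely focuses on a single farthest-uncovered member and argues lexicographically --- but once it is in place, the minimizer has its support meeting all of $\mathcal{B}$, giving the contradiction with $\text{order}(\mathcal{B})=m$.

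I would also note the equivalent route through the cops-and-robbers game characterizing treewidth: a positive-rank divisor of degree $d$ should furnish a winning strategy for $d$ helicopter cops, since $r(D)\ge 1$ means (via Lemma \ref{lemma:finiteseq}) a chip can always be routed to the robber's current vertex. The step I expect to be the genuine obstacle, in either formulation, is the treatment of chips in transit: a subset-firing move pushes a chip along an edge during which it guards nothing, so one must sequence the firings --- for instance always reducing toward the relevant base vertex and using the uniqueness in Lemma \ref{lemma:uniquevred} --- so that no transiently unguarded cut lets an uncovered member of $\mathcal{B}$ (the ``robber'') slip away. Pinning down that book-keeping is precisely the technical content of \cite{dbg}.
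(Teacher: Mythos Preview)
The paper does not prove this lemma at all; it merely states it and cites \cite{dbg}. There is therefore no ``paper's own proof'' to compare against --- the intended content here is simply a citation.

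As for your sketch on its own terms: the bramble (equivalently cops-and-robber) framing is sound and is indeed the route taken in \cite{dbg}, but what you have written is a roadmap rather than a proof. You correctly identify the obstruction --- moving a chip into one uncovered bramble element may uncover another --- and propose to resolve it by minimizing a potential and showing a Dhar-firing strictly decreases it. However, you never define the potential, and the assertion that ``connectivity of the blobs and the pairwise-touching property are exactly what force the fired cut to lie `between' the support and the missed member'' is the entire content of the lemma and is not argued. The lexicographic alternative and the cops-and-robber version are likewise only gestured at; you explicitly concede that ``pinning down that book-keeping is precisely the technical content of \cite{dbg}.'' So if the goal is a self-contained proof, the genuine gap is the missing monovariant and its monotonicity under firing; if the goal is to match what the paper does, a one-line citation to \cite{dbg} suffices.
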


It is shown in \cite{two} that, for a simple graph $G$, $\text{tw}(G) \geq \min \{\text{val}(v) : v \in V(G) \}$. Hence, we have the following result.

\begin{lemma}
\label{lemma:minvalence}
For a simple graph $G$, $\text{gon}(G) \geq \min \{ \text{val}(v) : v \in V(G) \}$.
\end{lemma}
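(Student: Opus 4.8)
The plan is to obtain this bound by chaining two estimates that are already available, so that no new chip-firing argument is required. First I would apply Lemma \ref{ref:treewidth}, which gives $\text{gon}(G)\ge\text{tw}(G)$ for every graph $G$. Then I would invoke the treewidth bound cited from \cite{two}, namely that for a \emph{simple} graph one has $\text{tw}(G)\ge\min\{\text{val}(v):v\in V(G)\}$ (this is the standard fact that treewidth dominates degeneracy, which in turn dominates the minimum degree). Composing the two displays yields
\[
\text{gon}(G)\;\ge\;\text{tw}(G)\;\ge\;\min\{\text{val}(v):v\in V(G)\},
\]
which is precisely the assertion.

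I should also note why the earlier bound does not already suffice: Lemma \ref{lemma:eta} only gives $\text{gon}(G)\ge\min\{|V(G)|,\eta(G)\}$, and for a simple graph $\eta(G)\le\min_v\text{val}(v)$ (delete all edges incident to a minimum-valence vertex), so that estimate is in general strictly weaker than what we want. Routing through treewidth is exactly what buys the improvement, which is why Lemma \ref{ref:treewidth} rather than Lemma \ref{lemma:eta} is the right tool here.

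Because both ingredients are imported essentially verbatim, there is no real obstacle in the argument itself; the only point needing care is bookkeeping the hypotheses. The simplicity assumption enters precisely through the bound $\text{tw}(G)\ge\min_v\text{val}(v)$ and cannot be dropped: the banana graph with $n$ parallel edges has every vertex of valence $n$ yet gonality $2$ and treewidth $1$, so both the intermediate inequality and the conclusion fail for multigraphs. Thus the proof is a two-line deduction whose content lies entirely in the cited treewidth estimate rather than in any new reasoning here.
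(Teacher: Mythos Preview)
Your proposal is correct and follows exactly the paper's approach: the paper derives this lemma immediately from Lemma~\ref{ref:treewidth} together with the fact (cited from \cite{two}) that $\text{tw}(G)\ge\min_v\text{val}(v)$ for simple graphs. Your additional remarks about Lemma~\ref{lemma:eta} and the banana-graph counterexample are accurate but go beyond what the paper records.
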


We also have the following ``trivial'' upper bound on gonality.

\begin{lemma}
\label{lemma:upperbound}
For a graph $G$, $\text{gon}(G) \leq |V(G)|$.
\end{lemma}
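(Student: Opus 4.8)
The plan is to exhibit an explicit effective divisor of degree $|V(G)|$ and rank at least $1$. I would take $D \coloneqq \sum_{v \in V(G)} (v)$, the divisor placing exactly one chip on each vertex of $G$; then $D \in \text{Div}_+(G)$ and $\deg(D) = |V(G)|$, so it remains only to check that $r(D) \geq 1$, which immediately yields $\text{gon}(G) \leq |V(G)|$.

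To verify $r(D) \geq 1$, I would appeal directly to the definition of rank. The effective divisors of degree $1$ are exactly the divisors of the form $(w)$ for $w \in V(G)$. For any such $w$, the divisor $D - (w) = \sum_{v \neq w} (v)$ has a nonnegative coefficient at every vertex, hence is effective, so $|D - (w)| \neq \varnothing$ since it contains $D - (w)$ itself (no chip-firing even being required). As this holds for every $F = (w) \in \text{Div}_+^1(G)$, the definition of rank gives $r(D) \geq 1$.

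Equivalently, one can phrase this through the Baker--Norine chip-firing game of Section~2.3: if the first player places one chip on each vertex and the adversary then removes a single chip from some vertex $w$, that vertex holds $0$ chips and every other vertex holds $1$ chip, so no vertex is in debt and the first player has already won. Hence $|V(G)|$ chips always suffice, confirming the bound.

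There is essentially no obstacle here; the only point requiring a little care is matching the formal definition of rank, namely that $r(D) \geq 1$ demands $|D - F| \neq \varnothing$ for \emph{every} $F \in \text{Div}_+^1(G)$, and this is immediate because $D$ dominates every degree-$1$ effective divisor coordinatewise.
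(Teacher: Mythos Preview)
Your proof is correct and matches the paper's approach: the paper does not spell out a formal proof of this lemma, calling it the ``trivial'' upper bound, but the sentence immediately following it makes clear that the intended argument is exactly yours---place one chip on every vertex to obtain a winning divisor.
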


This upper bound is typically only attained when the edge-connectivity of the graph is high relative to the number of vertices. In fact, if $G$ has a vertex $v$ which is not incident to any multiple edges, then $\text{gon}(G) \leq |V(G)|-1$, since placing one chip on every vertex except $v$ results in a winning divisor.

\section{Multigraphs of Gonality Three}
\label{section:multi}

In this section, we will prove the following result, which is simply the first part of Theorem \ref{thm:maintheorem} and applies to all multigraphs of edge-connectivity at least $3$. 

\begin{theorem}
\label{thm:multigraph}
If $G$ is a $3$-edge-connected graph, then the following are equivalent:
\begin{enumerate}
    \item $G$ has gonality $3$.
    \item There exists a non-degenerate harmonic morphism $\varphi : G \to T$ where $\text{deg}(\varphi) = 3$ and $T$ is a tree, or $|V(G)| = 3$.
\end{enumerate}
\end{theorem}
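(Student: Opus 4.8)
The plan is to prove the two implications separately, with the connectivity hypothesis entering mainly through Lemma~\ref{lemma:eta}. For $(2)\Rightarrow(1)$: if $|V(G)|=3$, then Lemma~\ref{lemma:eta} gives $\text{gon}(G)\ge\min\{3,\eta(G)\}=3$ while Lemma~\ref{lemma:upperbound} gives $\text{gon}(G)\le|V(G)|=3$, so $\text{gon}(G)=3$. Otherwise, let $\varphi\colon G\to T$ be a non-degenerate harmonic morphism of degree $3$ onto a tree. Since $\text{Jac}(T)=0$, the divisor $(y)$ at any leaf $y$ of $T$ is effective of rank $1$, and its pullback $\varphi^{*}((y))=\sum_{v\in\varphi^{-1}(y)}m_\varphi(v)(v)$ is effective of degree $\deg(\varphi)=3$. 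Using that $\varphi^{*}$ preserves linear equivalence and effectivity and that $m_\varphi(v)\ge 1$ for all $v$, one deduces $|\varphi^{*}((y))-(v)|\ne\varnothing$ for every vertex $v$ of $G$ from $|(y)-(\varphi(v))|\ne\varnothing$ on $T$, so $r(\varphi^{*}((y)))\ge 1$ and $\text{gon}(G)\le 3$. Then $\text{gon}(G)\ge 3$ follows from Lemma~\ref{lemma:eta} as soon as $|V(G)|\ge 3$ (the degenerate cases $|V(G)|\le 2$ being handled directly).

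For the harder implication $(1)\Rightarrow(2)$, assume $\text{gon}(G)=3$ and $|V(G)|\ge 4$; together with $3$-edge-connectedness this forces $\eta(G)=3$ by Lemma~\ref{lemma:eta}. Fix an effective divisor $D$ with $\deg(D)=3$ and $r(D)=1$. First I would analyze, for each vertex $v$, the reduced divisor $\text{Red}_v(D)$, which contains $v$ in its support by Lemma~\ref{lemma:vredrank}: running Dhar's burning algorithm (Algorithm~\ref{algo:dhars}) from $v$, and using that every edge cut of $G$ has size at least $3$, I would determine the possible local forms of $\text{Red}_v(D)$ and --- most importantly --- show that replacing $v$ by an adjacent vertex changes the reduced divisor only in a bounded, ``tree-compatible'' way. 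Next I would build a target tree $T$ whose vertices are the distinct divisors arising as $\text{Red}_v(D)$ (suitably grouped), with two of them adjacent when they differ by a single subset-firing move --- equivalently, $T$ is the tree of $3$-edge-cuts of $G$ cut out by the ``level structure'' of $D$; Lemma~\ref{lemma:finiteseq} together with $r(D)=1$ should be exactly what makes this structure connected and acyclic. Finally I would set $\varphi\colon G\to T$ to send $v$ to the node determined by $\text{Red}_v(D)$, extend it to edges in the only admissible way, and verify in turn that $\varphi$ is a morphism (the content of the first step), that $\varphi$ is harmonic (here $3$-edge-connectedness and the cut structure force $m_\varphi(v)$ to be independent of the chosen edge at $\varphi(v)$), that $\varphi$ is non-degenerate, and that $\deg(\varphi)=3$ (each edge of $T$ pulls back to a $3$-edge-cut, and each vertex-fibre has total multiplicity $\deg(D)=3$).

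It remains to justify the clause ``$|V(G)|=3$'': the graph on three vertices with each pair joined by a double edge has gonality $3$ by Lemmas~\ref{lemma:eta} and~\ref{lemma:upperbound}, yet admits no degree-$3$ non-degenerate harmonic morphism onto a tree (the target would have at most three vertices, and a short check rules out each case), so this clause cannot be dropped. The main obstacle is the heart of $(1)\Rightarrow(2)$: proving that the local data $\{\text{Red}_v(D)\}_{v}$ is globally coherent --- that it genuinely assembles into a tree and into a \emph{harmonic} map --- which is where $r(D)=1$, $\deg(D)=3$, and $\eta(G)=3$ must all be used simultaneously and where essentially all the case analysis lives. By comparison, already the gonality-$2$ situation behind Theorem~\ref{thm:chan} needs real work to manufacture an involution; here one obtains only the morphism, and not an order-$3$ automorphism as in condition~(3) of Theorem~\ref{thm:maintheorem}, since the cut tree of $G$ need carry no actual symmetry of $G$.
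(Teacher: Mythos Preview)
Your $(2)\Rightarrow(1)$ is essentially the paper's argument: pull back a point of $T$ along $\varphi$ and invoke Lemmas~\ref{lemma:eta} and~\ref{lemma:upperbound}.

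For $(1)\Rightarrow(2)$, your instinct---quotient $G$ by the map $v\mapsto\text{Red}_v(D)$---is exactly what the paper does; the fibers are the classes $[v]_D$ and the quotient map is the desired morphism.  Where you anticipate ``essentially all the case analysis,'' however, the paper bypasses Dhar-style casework with three short external inputs.  First, uniqueness of the effective $D'\sim D$ with $v\in\text{supp}(D')$ (so that $[v]_D=\text{supp}(\text{Red}_v(D))$ is well-behaved) is not extracted from the burning algorithm at all: it comes from injectivity of the degree-$2$ Abel--Jacobi map on a $3$-edge-connected graph \cite[Theorem~1.8]{bn}, packaged as Lemma~\ref{lemma:distinctorbits}.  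Second, a one-paragraph chip-firing argument (Lemma~\ref{lemma:threeedges}) shows that adjacent classes $[u]_D,[v]_D$ are joined by exactly three edges with $|E(u,[v]_D)|=D_u(u)$; this immediately gives $m_\varphi(v)=D_v(v)$ independent of the chosen edge, hence harmonicity, non-degeneracy, and $\deg(\varphi)=3$.  Third, rather than arguing directly that the ``tree of $3$-edge-cuts'' is acyclic via Lemma~\ref{lemma:finiteseq}, the paper notes $\varphi^*((x))\sim\varphi^*((y))$ for all $x,y\in G/\!\sim_D$, invokes injectivity of the induced map $\text{Jac}(G/\!\sim_D)\to\text{Jac}(G)$ \cite[Theorem~4.13]{bn2}, and concludes $(x)\sim(y)$, so $G/\!\sim_D$ is a tree by \cite[Lemma~1.1]{bn2}.

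So your outline is correct in shape, but the ``main obstacle'' you flag dissolves once you bring in the Abel--Jacobi and Jacobian-injectivity results from \cite{bn,bn2}; without them, your plan to prove global coherence of $\{\text{Red}_v(D)\}_v$ and acyclicity of the quotient purely from Dhar's algorithm and cut-counting would be substantially harder, and you have not indicated how that would actually go.
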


One approach to proving this result would be to apply \cite[Theorem 10.24]{cp} with $d=3$ for the direction (1) implies (2), and then argue that if (2) holds then hyperellipticity is not possible.  Here we present a different approach, which will also lay the groundwork for the subsequent section. We will first prove some preliminary results, which will allow us to define an equivalence relation on the vertices of $G$. From here, the map from $G$ to the resulting quotient graph provides our non-degenerate harmonic morphism.

\begin{lemma}
\label{prop:allgon3}
If $G$ is a graph with $\text{gon}(G) = 3$, then
\begin{enumerate}
    \item $g(G) \geq 3$, and
    \item either $G$ is at most $3$-edge-connected, or $|V(G)| = 3$.
\end{enumerate}
\end{lemma}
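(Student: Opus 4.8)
The plan is to read both parts directly off the gonality bounds assembled in Section~\ref{section:definitions}; there is essentially no creative content here, just bookkeeping with those inequalities.

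For part~(1), I would invoke the contrapositive of Proposition~\ref{prop:genus2}. That proposition asserts $g(G)\le 2 \Rightarrow \text{gon}(G)\le 2$, so the hypothesis $\text{gon}(G)=3$ immediately forces $g(G)\ge 3$, and nothing further is needed.

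For part~(2), I would show that if $G$ is \emph{not} at most $3$-edge-connected, i.e.\ $\eta(G)\ge 4$, then $|V(G)|=3$. First I would apply Lemma~\ref{lemma:eta}: since $3=\text{gon}(G)\ge \min\{|V(G)|,\eta(G)\}\ge \min\{|V(G)|,4\}$, the minimum must be at most $3$, which (as $\eta(G)\ge 4$) forces $|V(G)|\le 3$. Then I would apply the trivial upper bound Lemma~\ref{lemma:upperbound}: $\text{gon}(G)\le |V(G)|$, so $|V(G)|\ge \text{gon}(G)=3$. Combining the two gives $|V(G)|=3$, which is exactly the alternative we want. (Equivalently, one can phrase this as a proof by contradiction: assuming $\eta(G)\ge 4$ and $|V(G)|\neq 3$ gives $|V(G)|\le 2$ and hence $\text{gon}(G)\le 2$, contradicting $\text{gon}(G)=3$.)

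I do not expect any genuine obstacle. The only points requiring a moment's care are unpacking ``at most $3$-edge-connected'' as the inequality $\eta(G)\le 3$ (so that its negation is $\eta(G)\ge 4$), and remembering to use $\text{gon}(G)\le|V(G)|$ to pin $|V(G)|$ down to exactly $3$ in the high-connectivity case rather than leaving it as merely $|V(G)|\le 3$.
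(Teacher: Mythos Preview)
Your proposal is correct and matches the paper's proof almost exactly: both invoke Proposition~\ref{prop:genus2} for part~(1) and Lemma~\ref{lemma:eta} for part~(2). The only cosmetic difference is that to exclude $|V(G)|\le 2$ the paper explicitly lists all connected graphs on at most two vertices and checks their gonalities, whereas you use Lemma~\ref{lemma:upperbound} directly---a slightly cleaner way to reach the same conclusion.
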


\begin{proof}
Note that (1) comes as a corollary of Proposition \ref{prop:genus2}. For (2), assume that $|V(G)| \geq 4$ and $\eta(G) \geq 4$. Then, by Lemma \ref{lemma:eta}, we have $\text{gon}(G) \geq 4$.

\begin{figure}[ht]
    \centering
    \includegraphics[scale=0.2]{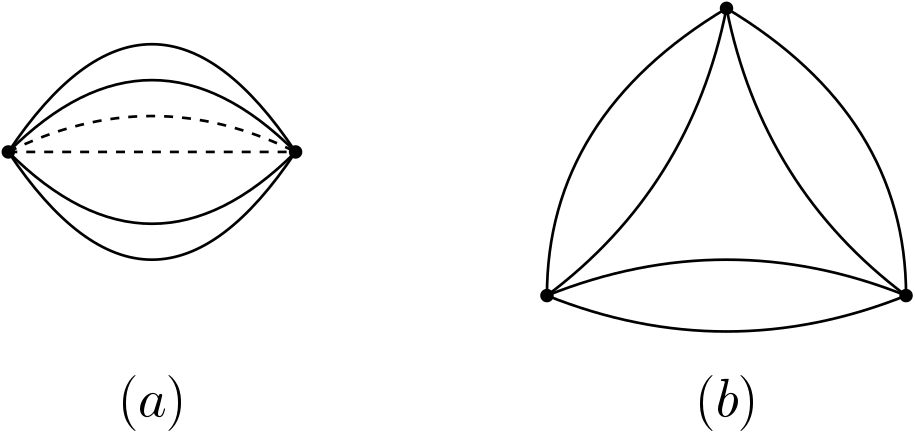}
    \caption{$(a)$: Banana graph, $(b)$: Graph with $|V(G)| = 3$}
    \label{fig:banana}
\end{figure}

If $|V(G)| < 3$, we know that $G$ is either a single point or the path $P_2$ (both of which have gonality $1$), or that $G$ is a  banana graph on two vertices, which has gonality $2$ (see Figure \ref{fig:banana}(a)). Hence, if $|V(G)| \leq 3$ and $\text{gon}(G) = 3$, then $|V(G)| = 3$. Notice that, as in Figure \ref{fig:banana}(b), we can have a $4$-edge-connected graph $G$ with $\text{gon}(G) = 3$ and $|V(G)| = 3$.
\end{proof}

As an aid for the reader, we introduce the $3$-edge-connected graph depicted in Figure \ref{fig:runningex}. After proving each of the following lemmas, we will demonstrate the effect of the result on this graph, culminating in the construction of a non-degenerate harmonic morphism down to a tree.

\begin{figure}[ht]
    \centering
    \includegraphics[scale=0.3]{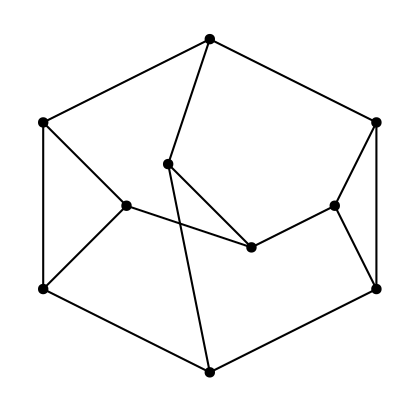}
    \caption{Running example graph used in Section \ref{section:multi}}
    \label{fig:runningex}
\end{figure}

For the next two lemmas, let $G$ be a simple, $3$-edge-connected graph with $\text{gon}(G)=3$, and let $D$ be a divisor on $G$ of rank $1$ and degree $3$.

\begin{lemma}
\label{lemma:distinctorbits}
For any vertex $v\in V(G)$, there is a unique divisor $D' \in \text{Div}_+(G)$ such that $D \sim D'$ and $v \in \text{supp}(D')$.
\end{lemma}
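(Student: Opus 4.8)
The plan is to show that the unique divisor in question is the $v$-reduced representative $\text{Red}_v(D)$. Existence is immediate: since $r(D)\ge 1$, Lemma~\ref{lemma:vredrank} gives $\text{Red}_v(D)(v)\ge 1$, and because a $v$-reduced divisor is effective away from $v$, the divisor $D':=\text{Red}_v(D)$ is effective, satisfies $D'\sim D$, and has $v\in\text{supp}(D')$. For uniqueness, by the uniqueness of $v$-reduced representatives (Lemma~\ref{lemma:uniquevred}) it is enough to prove that \emph{every} effective $D'\sim D$ with $v\in\text{supp}(D')$ is itself $v$-reduced; such a $D'$ must then equal $\text{Red}_v(D)$, and any two such divisors coincide.

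To establish that claim I would run Dhar's burning algorithm (Algorithm~\ref{algo:dhars}) on $D'$ with source $v$ and show the fire consumes all of $V(G)$ on the first pass, so that the algorithm returns $D'$ unchanged; by the correctness of the algorithm (\cite{bs}) this forces $\text{Red}_v(D')=D'$, and since $D'\sim D$ we conclude $D'=\text{Red}_v(D)$. So suppose instead that the burn stalls, leaving a nonempty unburnt set $U\subsetneq V(G)$. The fire starts at $v$, so $v\notin U$; hence $E(U,V(G)\setminus U)$ is a nonempty edge cut and, by $3$-edge-connectedness, $|E(U,V(G)\setminus U)|\ge\eta(G)\ge 3$. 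On the other hand, a stalled burn means each $u\in U$ satisfies $D'(u)\ge\text{outdeg}_u(U)$, so $\sum_{u\in U}D'(u)\ge\sum_{u\in U}\text{outdeg}_u(U)=|E(U,V(G)\setminus U)|\ge 3$. But $D'$ is effective of degree $3$ with $D'(v)\ge 1$ and $v\notin U$, so $\sum_{u\in U}D'(u)\le 3-D'(v)\le 2$ — a contradiction. Thus the burn never stalls, and the claim follows.

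I expect the only real content to be this numerical pinch: $3$-edge-connectedness guarantees at least $3$ edges across any cut, which is exactly enough to overwhelm the total budget of $3$ chips once one chip is pinned at the source $v$. (Notably, simplicity of $G$ plays no role here — only $3$-edge-connectedness, degree $3$, and $r(D)\ge 1$.) An alternative route — writing a putative second representative as $(v)+E$ with $E$ effective of degree $2$ and trying to force $r(E)\ge 1$ to contradict $\text{gon}(G)=3$ — also looks plausible, but it requires more care with degenerate low-degree configurations, whereas the Dhar's-burning argument handles everything uniformly. Conceptually, the statement is the combinatorial analogue of the classical fact that the fibres of a degree-$d$ map from a curve to $\mathbb{P}^1$ partition the curve.
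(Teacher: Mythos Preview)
Your proof is correct and takes a genuinely different route from the paper's. The paper establishes uniqueness via the Abel--Jacobi map: writing two putative representatives as $(v_1)+(v_2)+(v_3)\sim (v_1)+(v_2')+(v_3')$, it cancels $(v_1)$ to obtain $(v_2)+(v_3)\sim (v_2')+(v_3')$ and then invokes \cite[Theorem~1.8]{bn}, which asserts that the map $S^{(2)}:\text{Div}_+^2(G)\to\text{Jac}(G)$ is injective exactly when $G$ is $3$-edge-connected. Your argument bypasses the Jacobian entirely and shows directly, via Dhar's burning algorithm, that any effective degree-$3$ representative with a chip at $v$ is already $v$-reduced; uniqueness then follows from Lemma~\ref{lemma:uniquevred}. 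Your route is more elementary and fully self-contained within the paper's preliminaries, and it yields the mildly sharper conclusion $D'=\text{Red}_v(D)$ (which is in fact how $D_v$ gets used downstream). The paper's route is more structural, linking the lemma to the Abel--Jacobi theory that reappears later in the section when showing $G/\sim_D$ is a tree. Both arguments generalise painlessly to degree-$d$ divisors on $d$-edge-connected graphs, and both confirm your parenthetical remark that simplicity of $G$ plays no role.
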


\begin{proof}
Since $r(D)=1$, we know that for any vertex $v_1\in V(G)$, there exist (not necessarily distinct) vertices $v_2, v_3 \in V(G)$ such that $D \sim (v_1) + (v_2) + (v_3)$. Thus for any $v\in V(G)$, there exists at least one divisor $D' \in \text{Div}_+(G)$ such that $D \sim D'$ and $v \in \text{supp}(D')$.

For uniqueness of $D'$, consider the \textit{Abel-Jacobi map} $S^{(k)} : \text{Div}_+^k(G) \to \text{Jac}(G)$ with basepoint $v_0$, defined as follows:
\[
S^{(k)}((v_1) + \cdots + (v_k)) = [(v_1) - (v_0)] + \cdots + [(v_k) - (v_0)],
\]
where $[(v)]$ denotes the equivalence class associated to the divisor $(v)$ under the usual equivalence relation on divisors. Then, by Theorem 1.8 from \cite{bn}, $S^{(k)}$ is injective if and only if $G$ is $(k+1)$-edge-connected.  Suppose now that $D \sim (v_1) + (v_2) + (v_3)$ and that there exist two other vertices $v_2', v_3'$ satisfying $D \sim (v_1) + (v_2') + (v_3')$. Then, we see that
\[
(v_2) + (v_3) - 2(v_1) \sim D - 3(v_1) \sim (v_2') + (v_3') - 2(v_1).
\]
Since $G$ is $3$-edge-connected, the Abel-Jacobi map with basepoint $v_1$ is injective, and so up to relabelling we have $v_2 = v_2'$ and $v_3 = v_3'$.  Thus $D'$ is unique.
\end{proof}

Note that a generalization of Lemma \ref{lemma:distinctorbits} for divisors of degree $d$ and rank $1$ on $d$-edge-connected graphs for arbitrary $d \in \mathbb{Z}_{>0}$ is proven in \cite{cp}. 

For a vertex $v \in V(G)$, let $D_v$ denote the unique effective divisor satisfying both $D_v \sim D$ and $v \in \text{supp}(D_v)$. Define a new equivalence relation $\sim_D$ on $V(G)$ with $v_1 \sim_D v_2$ if and only if $v_1 \in \text{supp}(D_{v_2})$ and $v_2 \in \text{supp}(D_{v_1})$. The equivalence classes associated with this relation are
\[
[v]_D \coloneqq \{ v' \in V(G) : v' \in \text{supp}(D_v) \}.
\]

We define a morphism $\varphi: G \to G / \sim_D$ in the following way:

\begin{itemize}
    \item[(i)] If $v \in V(G)$, then $\varphi(v) = [v]_D$.
    \item[(ii)]  If $e = xy \in E(G)$ and $\varphi(x) = \varphi(y)$, then $\varphi(e) = [x]_D = [y]_D$.
    \item[(iii)]  If $e = xy \in E(G)$ and $\varphi(x) \neq \varphi(y)$, then $\varphi(e) = [e]_D$ (where $[e]_D$ has endpoints $[x]_D$ and $[y]_D$).
\end{itemize}

In our running example from Figure \ref{fig:runningex}, define $D$ to be the divisor with one chip on every vertex in the left-most $3$-cycle of the graph. Figure \ref{fig:partitioning} shows the partitioning of vertices into equivalence classes on our running example graph, as well as the effect of $\varphi$ on the graph.

\begin{figure}[ht]
    \centering
    \includegraphics[scale=0.4]{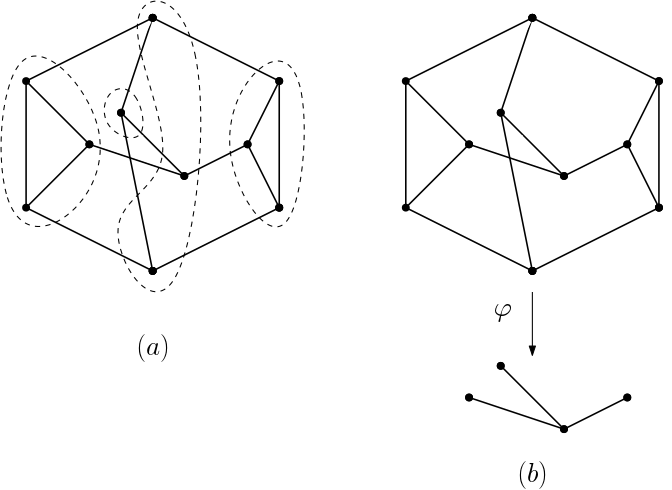}
    \caption{$(a)$ Vertex partition, $(b)$ Quotient morphism $\varphi$}
    \label{fig:partitioning}
\end{figure}

Since we will use this quotient morphism in our proof of Theorem \ref{thm:multigraph}, we now prove the following lemma, which will aid us in showing that this morphism is harmonic.

\begin{lemma}
\label{lemma:threeedges}
If $e = uv \in E(G)$ such that $[u]_D \neq [v]_D$, then there exist exactly three edges between the vertices in $[u]_D$ and the vertices in $[v]_D$. Furthermore, for any given vertex $u \in [u]_D$, 
\[
| E(u, [v]_D) | = D_u(u).
\]
\end{lemma}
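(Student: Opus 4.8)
The plan is to analyze the structure of the divisors $D_u$ and $D_v$ using the uniqueness from Lemma \ref{lemma:distinctorbits} together with Dhar's burning algorithm, which controls how chips can and cannot be fired around the graph. First I would set $U = [u]_D$, $W = [v]_D$, and observe that since $v \in \mathrm{supp}(D_u)$ is impossible (else $[u]_D = [v]_D$ by the definition of $\sim_D$ and uniqueness of $D_u = D_v$), the divisors $D_u$ and $D_v$ must be distinct effective divisors linearly equivalent to $D$, each of degree $3$. Next I would show that $D_u$ is supported precisely on $U$: by definition $\mathrm{supp}(D_u) \subseteq [u]_D = U$ via the containment $u' \in \mathrm{supp}(D_u)$; the reverse inclusion needs that $u' \sim_D u$ implies $u' \in \mathrm{supp}(D_u)$, which is immediate from the definition of $[v]_D$ (one direction of the symmetric relation suffices here). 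So $\mathrm{supp}(D_u) = U$ and $\sum_{u' \in U} D_u(u') = 3$, and likewise $\mathrm{supp}(D_v) = W$ with $\sum_{w \in W} D_v(w) = 3$.

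The heart of the argument is to count edges between $U$ and $W$. I would argue that $|E(U,W)| = 3$ by using the fact that firing the set $V(G) \setminus U$ carries $D_u$ to $D_v$ (or some intermediate effective divisor), combined with $3$-edge-connectedness. More precisely: since $D_v \sim D_u$ and both are effective, by Lemma \ref{lemma:finiteseq} there is a sequence of set-firing moves from $D_u$ to $D_v$ staying effective. I would show that the very first such move can be taken to be firing a set $A$ with $U \subseteq A$ and $A \cap W = \varnothing$, so that the $3$ chips currently on $U$ are precisely the $\mathrm{outdeg}$ of $A$; since $G$ is $3$-edge-connected, $\mathrm{outdeg}_A(A) \geq 3$, and since only $3$ chips sit on $A$, firing $A$ without inducing debt forces $\mathrm{outdeg}_A(A) = 3$ and forces each chip on a vertex $u' \in A$ to equal its outdegree, i.e. $D_u(u') = \mathrm{outdeg}_{u'}(A)$ for all $u' \in \mathrm{supp}(D_u) = U$. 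Then I would pin down that all three of these outgoing edges actually land in $W$: after firing $A$, the resulting divisor has exactly $3$ chips distributed on $V(G)\setminus A \supseteq W$, and iterating the reduction toward $D_v$ (whose support is $W$) together with the uniqueness of $D_v$ forces the chips to reach $W$; combined with $3$-edge-connectedness of the induced structure this yields $E(U, V(G)\setminus U) = E(U, W)$ and $|E(U,W)| = 3$. Consequently, for each fixed $u' \in U$ we get $|E(u', [v]_D)| = |E(u', W)| = \mathrm{outdeg}_{u'}(A) = D_u(u')$, which is the desired formula.

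The main obstacle I anticipate is justifying that the set-firing sequence from $D_u$ to $D_v$ can be arranged to take $U$ (or a set containing $U$ and avoiding $W$) to $W$ in a controlled way, and in particular that no outgoing edge from $U$ can terminate outside $W$. The subtlety is that a priori the chips could scatter to many vertices before reassembling on $W$; ruling this out requires leaning on the uniqueness statement of Lemma \ref{lemma:distinctorbits} (there is no other effective divisor equivalent to $D$ containing $v$ in its support) and on the $v$-reduced divisor machinery (Lemma \ref{lemma:uniquevred}, Lemma \ref{lemma:vredrank}, Corollary \ref{lemma:dhars}). A clean way to handle this is to run Dhar's burning algorithm from a vertex $v' \in W$: the fire cannot reach $U$ while leaving $W$ partially unburnt in a way that would contradict effectivity, and tracing the burnt/unburnt sets identifies $A = V(G)\setminus (\text{burnt set})$ as exactly the set whose firing realizes the transition, with the edge count falling out of the $3$-edge-connectedness bound $\mathrm{outdeg}_A(A) \geq 3$ squeezed against the total of $3$ chips. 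I would be careful that the two conclusions — the count of three edges, and the per-vertex identity $|E(u',[v]_D)| = D_u(u')$ — are extracted from the same firing move, since the second is simply the statement that each chip on $A$ is used up exactly by the edges it sends out, all of which go to $W$.
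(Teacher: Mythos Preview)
Your overall strategy---pass from $D_u$ to $D_v$ by a single set-firing move and read off the edge count from the chip transfer---is exactly the paper's approach. The gap is in the step you yourself flag as the main obstacle: you never actually establish that \emph{one} firing of a set $A$ carries $D_u$ all the way to $D_v$, nor that every outgoing edge from $A$ lands in $W$. Your proposed fix (``iterate the reduction toward $D_v$ \ldots combined with $3$-edge-connectedness of the induced structure'') does not do this; eventual arrival at $D_v$ says nothing about where the chips go after the \emph{first} move, and the $3$-edge-connectedness bound $\mathrm{outdeg}_A(A)\ge 3$ only controls the number of boundary edges, not their endpoints.

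The paper closes this gap with a short argument you are missing. Reduce $D_u$ toward $v$ (so by \cite[Lemma~3.21]{db} the vertex $v$ is never fired). The \emph{first} fired set $A$ must contain $u$: otherwise firing $A$ produces a new effective divisor that still has $u$ in its support, contradicting the uniqueness of $D_u$ from Lemma~\ref{lemma:distinctorbits}. Since $u\in A$, $v\notin A$, and $uv\in E(G)$, firing $A$ places at least one chip on $v$. But now the resulting effective divisor has $v$ in its support, so again by Lemma~\ref{lemma:distinctorbits} it must equal $D_v$. Thus a single firing move takes $D_u$ to $D_v$, and the edge count (exactly three edges from $[u]_D$ to $[v]_D$, with $|E(u',[v]_D)|=D_u(u')$ for each $u'$) drops out immediately from comparing the two divisors before and after this one move. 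Note that $3$-edge-connectedness is not invoked directly here; it enters only through the uniqueness lemma.
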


\begin{proof}
Suppose we have an edge $e = uv \in E(G)$ and we begin with the divisor $D_u$ satisfying $\text{supp}(D_u) = [u]_D$. By Lemmas \ref{lemma:finiteseq} and \ref{lemma:uniquevred}, there exists a unique $v$-reduced divisor equivalent to $D_u$ which can be reached by a finite sequence of chip-firing moves. Furthermore, by Lemma 3.21 of \cite{db}, we never need to fire from the vertex $v$ itself during the reduction process. Since $u$ and $v$ are connected by an edge, our first chip-firing move must move at least one chip onto $v$; otherwise, we would have fired a collection of vertices not including $u$, thereby obtaining another effective divisor $D'$ with $u\in\text{supp}(D')$, which contradicts the uniqueness of $D_u$. However, by the uniqueness of the divisor $D_v$ with $v \in \text{supp}(D_v)$, we must have moved all three chips onto $[v]_D$ with this single chip-firing move. This implies that there exist at least three edges from $[u]_D$ to $[v]_D$ because only one chip can be sent along any given edge. On the other hand, because we were able to successfully fire our three chips from $[u]_D$ onto $[v]_D$ without inducing debt, this also implies that there are at most three edges and that the number of edges outgoing from each vertex $u \in [u]_D$ is equivalent to $D_u(u)$. This establishes our claim.
\end{proof}

Note that in our running example, the partition of vertices depicted in Figure \ref{fig:partitioning}$(a)$ shows that there are exactly three edges between every pair of adjacent vertex classes, and that the number of edges incident with each vertex $v$ in the class is precisely the number of chips on $v$ in the associated divisor $D_v$. 

Armed with these results, we can now prove the main result of this section.

\begin{proof}[Proof of Theorem \ref{thm:multigraph}]
We will first show that $(1) \implies (2)$. Let $G$ be a graph of gonality $3$. If $|V(G)| \leq 3$, then by the proof of Lemma \ref{prop:allgon3}, we know that $\text{gon}(G) = 3$ only if $|V(G)| = 3$. 

Assume now that $|V(G)| > 3$. Since $\text{gon}(G)=3$, there exists a divisor $D \in \text{Div}_+(G)$ such that $\text{deg}(D) = 3$ and $r(D) = 1$. Define the equivalence relation $\sim_D$ as before, with $[v]_D$ again referring to the equivalence class associated to $v$ under $\sim_D$.  Let $\varphi$ be the quotient morphism $\varphi : G \to G / \sim_D$ defined above. We will now show that $\varphi$ is a non-degenerate harmonic morphism of degree $3$.

By Lemma \ref{lemma:threeedges}, we have
\[
m_\varphi(v) = | \{ e \in E(G) : v \in e, \varphi(e) = [e]_D \} | = D_{v}(v),
\]
for each $[e]_D \in E(G/\sim_D)$ such that $[v]_D \in [e]_D$. The assumption that $|V(G)| > 3$ ensures that we have at least one edge between vertices in different equivalence classes. Since $D_{v}(v) > 0$ for each $v \in [v]_D$, our morphism is non-degenerate. Furthermore, since $m_\varphi(v) = D_v(v)$ does not depend on our choice of $[e]_D \in E(G/\sim_D)$, our morphism is harmonic. Hence, 
\[
\text{deg}(\varphi) = \sum_{v \in [v]_D} |\{ e \in E(G) : v \in e, \varphi(e) = [e]_D \}| = \sum_{v \in [v]_D} D_{v}(v) = 3.
\]

We will now show that $\varphi(G) = G / \sim_D$ is a tree.  We define the \textit{pullback} map $\varphi^* : \text{Div}(G') \to \text{Div}(G)$ associated to a harmonic morphism $\varphi : G \to G'$ as
\[
(\varphi^*(D'))(v) = m_{\varphi}(v) \cdot D'(\varphi(v)).
\]
For any two vertices $x, y \in \varphi(G)$, we see that

\begin{align*}
    \varphi^*((x)) &= \sum_{\substack{v \in V(G), \\ \varphi(v) = x}} m_\varphi(v) \cdot (v) 
    = \sum_{v \in [x]_D} D_{v}(v) \cdot (v) 
    \\&\sim \sum_{v' \in [y]_D} D_{v'}(v') \cdot (v') 
    = \sum_{\substack{v' \in V(G), \\ \varphi(v') = y}} m_\varphi(v') \cdot (v') 
    = \varphi^*((y)).
\end{align*}
By Theorem 4.13 from \cite{bn2}, the induced homomorphism $\overline{\varphi} : \text{Jac}(G') \to \text{Jac}(G)$ is injective. Since $\varphi^*((x)) \sim \varphi^*((y))$, we find that $(x) \sim (y)$. Applying Lemma 1.1 of \cite{bn2} now shows that $G / \sim_D$ is a tree.

For the reverse direction $(2) \implies (1)$, first suppose that $|V(G)| = 3$. Then, by Lemma \ref{lemma:eta},  $$\text{gon}(G) \geq \min \{ \eta(G), |V(G)| \} = 3$$ and by Lemma \ref{lemma:upperbound}, $\text{gon}(G) \leq 3$.  Suppose now that there exists a non-degenerate harmonic morphism $\varphi : G \to T$ such that $\text{deg}(\varphi) = 3$ and $T$ is a tree. Fix $x_0 \in T$ and let 
 \[
 D = \varphi^*((x_0)) = \sum_{v \in V(G) : \varphi(v) = x_0} m_\varphi(v) \cdot (v).
 \]
 It is clear that $D$ is effective and by Lemma 2.13 in \cite{bn2}, $\text{deg}(D) = 3$. We claim that $r(D) \geq 1$. Pick $x \in G$. Since $T$ is a tree, by Lemma 1.1 from \cite{bn2}, $(\varphi(x)) \sim (x_0)$. Now, by Proposition 4.2 (again from \cite{bn2}), 

 \begin{align*}
 D = \varphi^*((x_0)) \sim \varphi^*((\varphi(x))) &= \sum_{v \in V(G) : \varphi(v) = \varphi(x)} m_\varphi(v) \cdot (v) \\
 &= m_\varphi(x) \cdot (x) + \sum_{v \in V(G) : v \neq x, \varphi(v) = \varphi(x)} m_\varphi(v) \cdot (v) \\
 &= m_\varphi(x) \cdot (x) + E,
 \end{align*}
 where $E$ is an effective divisor. Notice that because $\varphi$ is non-degenerate, $m_\varphi(x) > 0$ so $D \sim c \cdot (x) + E$ for each $x \in V(G)$ where $c \in \mathbb{Z}_{>0}$. Hence, we find that $r(D) \geq 1$. By Lemma \ref{lemma:eta}, it follows that $\text{gon}(G) = 3$.
\end{proof}

Theorem \ref{thm:multigraph} can be applied to determine the geometric gonalities of graphs with known divisorial gonalities. For example, consider the $3$-cube graph $Q_3$ illustrated in Figure \ref{fig:cube}, which is $3$-edge-connected. It can be computationally verified that $\text{gon}(Q_3) = 4$. Since this graph is not a tree and doesn't have divisorial gonality $2$ or $3$, we know by Theorems \ref{thm:chan} and \ref{thm:multigraph} that $\text{ggon}(Q_3) \geq 4$. Figure \ref{fig:cube} depicts a non-degenerate harmonic morphism on $Q_3$ of degree $4$, so we must have $\text{ggon}(Q_3) = 4$.

\begin{figure}[hbt]
    \centering
    \includegraphics[scale=0.3]{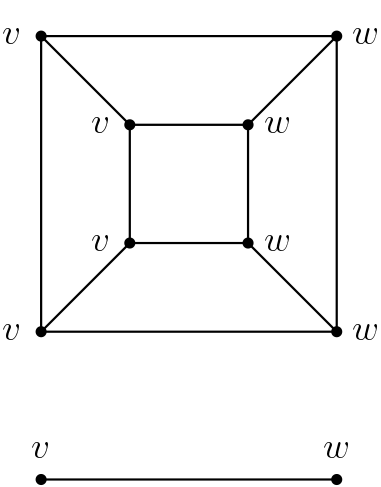}
    \caption{The $3$-cube $Q_3$ with gonality $4$}
    \label{fig:cube}
\end{figure}

We can also apply Theorem~\ref{thm:multigraph} to certain graphs with bridges, assuming that they become $3$-edge-connected after contracting these bridges. This is due to the following proposition, which comes as an immediate consequence of Corollary 5.10 in \cite{bn2} on rank-preservation under bridge contraction: 

\begin{proposition}
\label{prop:bridgecontract}
If $G$ is a graph and $G'$ is the graph obtained by contracting every bridge of $G$, then $\text{gon}(G) = 3$ if and only if $\text{gon}(G') = 3$.
\end{proposition}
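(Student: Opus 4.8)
The plan is to reduce the statement to the case of a single bridge and then invoke Corollary 5.10 of \cite{bn2} directly; in fact this yields the stronger conclusion that $\text{gon}(G)=\text{gon}(G')$, of which the claim about the value $3$ is an immediate special case.

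First I would check that ``contracting every bridge of $G$'' is unambiguous. If $e$ is a bridge of $G$, then $e$ lies in no cycle of $G$; since $e$ is a bridge, every cycle of $G/e$ lifts to a cycle of $G$ avoiding $e$ (a lift passing ``through'' the contracted vertex would give a $u$--$v$ path that closes up with $e$ into a cycle through a bridge), so the cycles of $G/e$ are in natural bijection with the cycles of $G$. Consequently an edge $f\neq e$ is a bridge of $G/e$ if and only if it is a bridge of $G$, and the bridge set of $G/e$ is exactly the bridge set of $G$ with $e$ removed. Contracting a bridge also cannot create a loop (an edge parallel to $e$ would prevent $e$ from being a bridge) and cannot disconnect the graph, so $G/e$ is again a graph in the sense of Section~\ref{section:definitions}. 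Iterating, contracting the bridges of $G$ one at a time in any order arrives at the same bridgeless graph $G'$, so it suffices to prove that contracting a single bridge preserves gonality.

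Next, let $e=uv$ be a bridge of $G$, let $G' := G/e$, and let $\pi\colon G\to G'$ be the contraction, merging $u$ and $v$ into a single vertex $w$. The pushforward $\pi_*\colon \text{Div}(G)\to \text{Div}(G')$, which sends $(u)$ and $(v)$ to $(w)$ and fixes every other vertex, preserves degrees and effectivity, and it is surjective onto $\text{Div}_+(G')$: given an effective $D'$ on $G'$, placing $D'(w)$ chips on $u$ and otherwise agreeing with $D'$ produces an effective preimage of the same degree. By Corollary 5.10 of \cite{bn2}, contraction of the bridge $e$ preserves the rank of every divisor, i.e. $r_{G'}(\pi_*D)=r_G(D)$ for all $D\in\text{Div}(G)$. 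Combining these facts: if $D$ is an effective divisor on $G$ with $r_G(D)\geq 1$, then $\pi_*D$ is an effective divisor on $G'$ of the same degree with $r_{G'}(\pi_*D)\geq 1$; and conversely, lifting any effective winning divisor on $G'$ through $\pi_*$ gives one on $G$ of the same degree. Hence $\text{gon}(G)=\text{gon}(G')$, and in particular $\text{gon}(G)=3$ if and only if $\text{gon}(G')=3$.

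I do not expect a serious obstacle, since Corollary 5.10 of \cite{bn2} supplies precisely the rank-preservation statement needed. The only points requiring a little care are the well-definedness and order-independence of the full bridge contraction (handled in the second paragraph) and the observation that, because we want the gonality to \emph{equal} $3$ rather than merely be at most $3$, we must use rank preservation in both directions---pushing winning divisors down from $G$ to $G'$ and lifting them back up---rather than only one.
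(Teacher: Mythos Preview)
Your proposal is correct and follows the same approach as the paper, which simply asserts that the proposition is ``an immediate consequence of Corollary 5.10 in \cite{bn2} on rank-preservation under bridge contraction'' without further detail. You have filled in exactly the routine verifications (order-independence of bridge contraction, the two-directional use of rank preservation to pin down gonality rather than just bound it) that the paper leaves implicit.
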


\section{Simple Graphs of Gonality Three}
\label{section:simple}

We now restrict our attention to graphs that are simple.  We say a graph of gonality $3$ \emph{satisfies the zero-three condition} if there exists a divisor $D$ such that $\deg(D)=3$, $r(D)=1$, and for any three distinct vertices $a,b,c$ with $D\sim (a)+(b)+(c)$, we have that $a,b,c$ either share $0$ edges or share $3$ edges.

The following theorem extends Theorem \ref{thm:multigraph} by adding an extra equivalent statement, under a few stronger assumptions.

\begin{theorem}
\label{thm:simpletheorem}
If $G$ is a simple, $3$-vertex-connected combinatorial graph satisfying the zero-three condition, then the following are equivalent:
\begin{enumerate}
    \item $G$ has gonality $3$.
    \item There exists a non-degenerate harmonic morphism $\varphi : G \to T$, where $\text{deg}(\varphi) = 3$ and $T$ is a tree.
    \item There exists a cyclic automorphism $\sigma: G \to G$ of order $3$ that does not fix any edge of $G$, such that $G/\sigma$ is a tree. 
\end{enumerate}
The equivalence of (1) and (2) and the implication (3) implies (1) and (2) still hold without the zero-three condition assumption.
\end{theorem}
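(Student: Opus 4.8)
\textbf{Proof proposal for Theorem \ref{thm:simpletheorem}.}

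The plan is to organize the argument around the cycle (1) $\Rightarrow$ (3) $\Rightarrow$ (2) $\Rightarrow$ (1), while keeping track of which hypotheses each arrow consumes. The arrows (1) $\Leftrightarrow$ (2) come for free from Theorem \ref{thm:multigraph}, since a simple $3$-vertex-connected graph is $3$-edge-connected (as $\kappa(G)\le\eta(G)$) and has more than $3$ vertices whenever it is not a single triangle; the $|V(G)|=3$ clause of Theorem \ref{thm:multigraph} is harmless here because a simple graph on $3$ vertices is just the triangle, which is itself the quotient of nothing nontrivial. For (3) $\Rightarrow$ (1),(2): given a cyclic automorphism $\sigma$ of order $3$ fixing no edge and with $G/\sigma$ a tree, I would build the degree-$3$ non-degenerate harmonic morphism $\varphi\colon G\to G/\sigma$ directly as the quotient map. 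The content is to check that $\varphi$ is harmonic of degree $3$ and non-degenerate: because $\sigma$ has order $3$ and fixes no edge, every orbit of edges has size exactly $3$, so each edge of $G/\sigma$ has three preimages, giving $\deg(\varphi)=3$; non-degeneracy and harmonicity follow because $\sigma$ acts on the edges incident to each vertex-orbit uniformly. This direction needs neither the zero-three condition nor simplicity in an essential way, matching the final sentence of the theorem.

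The substantive direction is (1) $\Rightarrow$ (3), which is where both simplicity and the zero-three condition are used. Starting from a gonality-$3$ graph with the zero-three condition, I would fix a witness divisor $D$ of degree $3$ and rank $1$ satisfying the zero-three property, and reuse the machinery of Section \ref{section:multi}: the relation $\sim_D$ partitions $V(G)$ into the equivalence classes $[v]_D=\mathrm{supp}(D_v)$, and by Lemma \ref{lemma:threeedges} adjacent classes are joined by exactly three edges. The goal is to promote the harmonic morphism $\varphi\colon G\to G/\sim_D$ into an honest order-$3$ automorphism $\sigma$ of $G$ whose quotient is the tree $G/\sim_D$. The key structural claim I would establish is that, under the zero-three condition, every equivalence class $[v]_D$ has size exactly $3$ or exactly $1$ (size $3$ when the three chips sit on distinct vertices sharing edges, size $1$ when all three chips of $D_v$ pile on a single vertex), and moreover that the edges between adjacent classes always realize a consistent ``parallel'' matching: if $[u]_D$ and $[v]_D$ are adjacent classes of size $3$, the three connecting edges pair up the three vertices of one class bijectively with the three of the other. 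Here the zero-three condition is exactly what forbids the mixed configurations (two chips on one vertex, one on another, or two adjacent chips with the third isolated) that would break the rigidity needed to define $\sigma$.

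From this structural rigidity I would define $\sigma$ by picking a cyclic ordering $(u_1,u_2,u_3)$ on each size-$3$ class and setting $\sigma(u_i)=u_{i+1}$, with $\sigma$ fixing each singleton class; the parallel-matching claim lets me choose these orderings compatibly across adjacent classes so that $\sigma$ respects edges, hence is a graph automorphism. I expect the main obstacle to be precisely this \emph{compatibility of the cyclic orderings across the whole graph}: locally each class admits a $3$-fold symmetry, but one must rule out a ``holonomy'' obstruction that could make the cyclic labelings disagree around a cycle in $G/\sim_D$. This is exactly where $3$-vertex-connectivity and the fact that $G/\sim_D$ is a \emph{tree} do the work: since the quotient is a tree it is simply connected, so there are no cycles around which the labeling could fail to close up, and I can propagate a chosen ordering outward from a root class without contradiction. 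Simplicity is used to ensure the three edges between adjacent size-$3$ classes genuinely form a matching (no two landing on the same endpoint), and $3$-vertex-connectivity rules out degenerate small classes and guarantees the quotient tree structure interacts with the classes as required. Once $\sigma$ is constructed, verifying it has order $3$, fixes no edge, and has quotient equal to the tree $G/\sim_D$ is routine, completing (1) $\Rightarrow$ (3) and hence the full cyclic equivalence.
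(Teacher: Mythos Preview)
Your overall architecture matches the paper's: (1)$\Leftrightarrow$(2) via Theorem~\ref{thm:multigraph}, (3)$\Rightarrow$(2) by checking the quotient map is harmonic of degree~3, and (1)$\Rightarrow$(3) by building $\sigma$ from the $\sim_D$--classes and propagating cyclic orderings. The one genuine difference is how you resolve the compatibility of the propagated orderings. The paper (Proposition~\ref{prop:cyclic}) argues directly via Dhar's burning algorithm: if two neighbors $[u]_D$ and $[w]_D$ induced conflicting orderings on $[v]_D$, then running Dhar from the starting class either burns the whole graph (contradicting $r(D)\ge1$) or forces a single degree-$3$ rank-$1$ divisor supported on both $[u]_D$ and $[w]_D$ (contradicting Lemma~\ref{lemma:distinctorbits}). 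You instead invoke that $G/\!\sim_D$ is already known to be a tree from the proof of Theorem~\ref{thm:multigraph}, so there is a unique path from the root class to each class and no holonomy can arise. This is legitimate and arguably more conceptual; the paper's Dhar argument is more self-contained but heavier.

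One correction on the attribution of hypotheses: the dichotomy ``every class has size $1$ or $3$'' (ruling out $2(v_1)+(v_2)$) is \emph{not} a consequence of the zero-three condition; it is Lemma~\ref{lemma:1or3}, proved using simplicity and $3$-vertex-connectedness alone. The zero-three condition enters only afterward, to guarantee that the three vertices of a size-$3$ class share either a full triangle or no edges at all, so that any cyclic permutation of the class preserves the intra-class edges. Your sentence ``the zero-three condition is exactly what forbids the mixed configurations (two chips on one vertex, one on another\ldots)'' conflates these two steps. Likewise, the fact that the three edges between two adjacent size-$3$ classes form a perfect matching follows from Lemma~\ref{lemma:threeedges} (each $u\in[u]_D$ has $D_u(u)=1$ edge to $[v]_D$), not from simplicity per se. These are bookkeeping issues rather than gaps; once the hypotheses are pinned to the right lemmas, your argument goes through.
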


Notice that we no longer need to worry about the case where $|V(G)| = 3$; this is because there are no simple $3$-vertex-connected graphs with exactly $3$ vertices. Also note that while statements (1) and (2) in Theorem \ref{thm:simpletheorem} are nearly identical to those given in Theorem \ref{thm:chan}, statement (3) now requires the extra condition that the automorphism $\sigma$ does not fix any edge of $G$. In our proof of this theorem, we will show that this condition is required for the implication $(3) \implies (2)$ to hold.

\begin{figure}[hbt]
\centering
\includegraphics[scale=1]{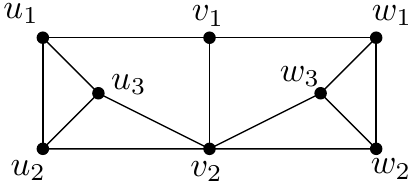}
\caption{A $3$-edge-connected graph of gonality $3$ without an automorphism of order $3$}
\label{fig:no_automorphism}
\end{figure}

We should also remark why we need the stronger assumptions that our graph is $3$-vertex-connected instead of $3$-edge-connected, and that our graph satisfies the zero-three condition.  Consider the graph $G$ in Figure \ref{fig:no_automorphism}.  The divisor $(v_1)+2(v_2)$ has positive rank, so $G$ has gonality at most $3$; and since $G$ has $K_4$ as a minor, the treewidth of the graph, and thus its gonality, is at least $3$ by \cite{bod} and Lemma \ref{ref:treewidth}.  Moreover, $G$ is $3$-edge-connected, although it is not $3$-vertex-connected, since removing $v_1$ and $v_2$ disconnects the graph.  (The graph $G$ does happen to satisfy the zero-three condition, via the divisor $(v_1)+2(v_2)$.) Finally, let us determine the automorphism group of $G$.  Any automorphism must send $v_2$ to $v_2$ since it is the only vertex of degree $5$, and $v_1$ to $v_1$ since it is the only vertex not on a cycle of length $3$.  From there, using adjacency relations of vertices we can determine that $\textrm{Aut}(G)$ is isomorphic to $\mathbb{Z}/2\mathbb{Z}\times \mathbb{Z}/2\mathbb{Z}\times \mathbb{Z}/2\mathbb{Z}$, and can be generated by three automorphisms of order $2$: one switching $u_2$ and $u_3$, one switching $w_2$ and $w_3$, and one switching $u_i$ with $w_i$ for all $i$ in $\{1,2,3\}$.  Since $|\textrm{Aut}(G)|=8$, the graph $G$ does not have an automorphism of order $3$, even though it is $3$-edge-connected and has gonality $3$.

\begin{figure}[hbt]
\centering
\includegraphics[scale=1]{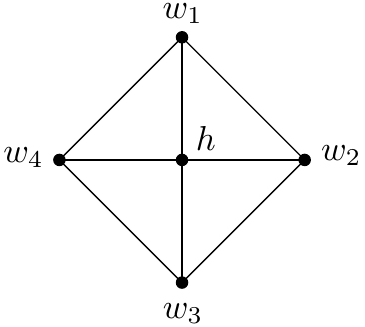}
\caption{The wheel graph $W_5$}
\label{fig:wheel}
\end{figure}

To see that the zero-three condition is necessary, consider the graph in Figure \ref{fig:wheel}, which is the wheel graph $W_5$ on $5$ vertices.  It has gonality $3$:  the divisor $(w_1)+(h)+(w_3)$ has positive rank; and since the graph has $K_4$ as a minor, the treewidth of the graph, and thus its gonality, is at least $3$ by \cite{bod} and Lemma \ref{ref:treewidth}.  It is also $3$-vertex-connected.  However, we claim that it does not satisfy the zero-three condition.  Let $D$ be any effective rank $1$ divisor of degree $3$ on  $W_5$.  We will see in Lemma \ref{lemma:1or3} that $D$ must have support size $1$ or $3$, and that it is equivalent to some divisor with support size $3$.  If $W_5$ satisfies the zero-three condition, then since any three vertices have at least one edge in common, there must be a rank $1$ divisor $(a)+(b)+(c)$ on $W_5$ where $a,b,c$ are distinct and form a $K_3$ in the graph.  It follows that $(w_i)+(w_{i+1})+(h)$ has rank $1$ for some $i$, where addition is done modulo $4$.  However, this divisor does not have rank $1$:  starting Dhar's burning algorithm from $w_{i+2}$ burns the whole graph.  Thus $W_5$ must not satisfy the zero-three condition.  Finally, the automorphism group of $W_5$ is the same as the automorphism group of the cycle $C_4$, namely the dihedral group of order $8$.  This group does not have any elements of order $3$.

Now, let $D$ be a divisor of degree 3 and rank $1$ on a graph of gonality $3$.  Recall the equivalence relation $\sim_D$ on $V(G)$ resulting from Lemma \ref{lemma:distinctorbits}: $v_1 \sim_D v_2$ if and only if $v_1 \in \text{supp}(D_{v_2})$ and $v_2 \in \text{supp}(D_{v_1})$.  We will use this relation to define a permutation $\sigma$ of the vertices of $G$, which we will then prove to be a cyclic automorphism of order $3$.  First, we need the following lemma.

\begin{lemma}
\label{lemma:1or3}  Let $G$ be a simple, $3$-vertex-connected graph with $\text{gon}(G) = 3$, with $D \in \text{Div}_+(G)$ such that $r(D) = 1$ and $\text{deg}(D) = 3$.
If $D\sim (v_1)+(v_2)+(v_3)$, then either $v_1=v_2=v_3$ or $v_1$, $v_2$, and  $v_3$ are all distinct.
\end{lemma}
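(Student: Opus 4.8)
The plan is to rule out the only remaining possibility by contradiction. The statement says $v_1,v_2,v_3$ are either all equal or all distinct, so the case to eliminate is that exactly two coincide, which up to relabelling means $D\sim 2(a)+(b)$ with $a\neq b$. I will show this forces a two-element vertex cut of $G$, contradicting $3$-vertex-connectedness.

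The first step is to pin down the $\sim_D$-class of $a$ and $b$. Since $2(a)+(b)$ is effective, equivalent to $D$, and has both $a$ and $b$ in its support, the uniqueness statement in Lemma~\ref{lemma:distinctorbits} forces $D_a=D_b=2(a)+(b)$, so the class $[a]_D=[b]_D$ is exactly $\{a,b\}$. Next, because $G$ is simple and $3$-edge-connected (as $\kappa(G)\le\eta(G)$), has $\text{gon}(G)=3$, and has $|V(G)|\ge 4>3$ (there is no simple $3$-connected graph on fewer than $4$ vertices), the construction in the proof of Theorem~\ref{thm:multigraph} applies to this $D$: the quotient map $\varphi\colon G\to T:=G/\!\sim_D$ is a non-degenerate harmonic morphism of degree $3$ onto a \emph{tree} $T$. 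Write $t\in V(T)$ for the image vertex $\{a,b\}$ and let $k$ be the valence of $t$ in $T$. For each class $C$ adjacent to $t$ in $T$, Lemma~\ref{lemma:threeedges} yields exactly three edges between $\{a,b\}$ and $C$, of which $|E(a,C)|=D_a(a)=2$ are incident to $a$ and $|E(b,C)|=D_b(b)=1$ is incident to $b$. Counting the edges at $b$ and using simplicity (so $|E(a,b)|\le 1$) gives $\text{val}(b)=|E(a,b)|+k\le 1+k$; since $3$-edge-connectedness forces $\text{val}(b)\ge 3$ we get $k\ge 2$, while $G$ connected on at least $4$ vertices excludes $k=0$ in any event.

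Finally I would extract the cut. Deleting $t$ from the tree $T$ leaves exactly $k\ge 2$ connected components, and the $\varphi$-preimages of these components partition $V(G)\setminus\{a,b\}$ into $k$ nonempty blocks $B_1,\dots,B_k$ (nonemptiness because each component of $T\setminus\{t\}$ contains a neighbour of $t$, which is a nonempty class). Any edge of $G$ joining $B_i$ to $B_j$ with $i\neq j$ would map under the morphism $\varphi$ to an edge of $T$ whose endpoints lie in distinct components of $T\setminus\{t\}$, which is impossible in a tree; hence $G-\{a,b\}$ is disconnected, so $\kappa(G)\le 2$, a contradiction. I expect the main obstacle to be precisely this last passage — going from ``the three edges leaving $\{a,b\}$ toward each neighbouring class are forced'' to ``$\{a,b\}$ is genuinely a cut set'' — since controlling the local edge pattern around $\{a,b\}$ alone does not preclude a path in $G-\{a,b\}$ from ``going around'' between two blocks; it is the tree structure of $G/\!\sim_D$ furnished by Theorem~\ref{thm:multigraph}, together with the fact that a morphism carries $G$-paths to $T$-walks, that closes this gap. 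A secondary, routine point is checking that all hypotheses needed for Lemmas~\ref{lemma:distinctorbits} and~\ref{lemma:threeedges} and for Theorem~\ref{thm:multigraph} are in force, which holds because $3$-vertex-connectedness implies $3$-edge-connectedness and rules out $|V(G)|\le 3$.
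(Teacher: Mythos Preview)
Your argument is correct, and it is genuinely different from the paper's. The paper gives a short, self-contained proof via Dhar's burning algorithm: assuming $D'\sim D$ with $D'=2(v_1)+(v_2)$ and $v_1\neq v_2$, pick any $v_0\notin\{v_1,v_2\}$ and light the fire at $v_0$. Since $G-\{v_1,v_2\}$ is connected (by $3$-vertex-connectedness), everything but $v_1,v_2$ burns; simplicity and $\mathrm{val}(v_2)\ge 3$ then force $v_2$ to burn, and finally $v_1$ burns as well. Thus $D'$ is $v_0$-reduced with $v_0\notin\mathrm{supp}(D')$, contradicting $r(D)\ge 1$. By contrast, you invoke the whole apparatus of Section~\ref{section:multi} (Lemmas~\ref{lemma:distinctorbits} and~\ref{lemma:threeedges} and the tree structure of $G/\!\sim_D$ from Theorem~\ref{thm:multigraph}) to manufacture a $2$-cut $\{a,b\}$. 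The dependencies are legitimate (none of those results use Lemma~\ref{lemma:1or3}), and your degree count $\mathrm{val}(b)=|E(a,b)|+k\le 1+k$ together with the tree-component argument is clean. The tradeoff is clear: the paper's route is more elementary and avoids any appeal to the harmonic-morphism machinery, while your route is more structural and explains \emph{why} a support of size two would violate $3$-vertex-connectedness, not just $r(D)\ge 1$; it would also generalize more readily if one wanted to locate a cut rather than merely derive a rank contradiction.
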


\begin{proof}
Suppose for the sake of contradiction that there exists a divisor $D' \in \text{Div}_+(G)$ such that $D' \sim D$ and $D' = 2(v_1) + (v_2)$ where $v_1 \neq v_2$. Let $v_0\in V(G)$ be distinct from $v_1$ and $v_2$, and start Dhar's burning algorithm at $v_0$.  Since $G$ is $3$-vertex-connected, the graph $G-\{v_1,v_2\}$ is connected, so every vertex in $G$ besides $v_1$ and $v_2$ will burn.  Since $\deg(v_2)\geq 3$ and since $G$ is simple, there are at least two edges connecting $v_2$ to $G-\{v_1,v_2\}$.  Both these edges are burning, so $v_2$ burns since it only has one chip.  At this point the whole graph besides $v_1$ is burning.  Since $\deg(v_1)\geq 3$, there are at least three burning edges incident to $v_1$.  Since $v_1$ has two chips, $v_1$ burns, and thus the entire graph burns.  This shows that $D'$ is $v_0$-reduced. Since $v_0 \notin \text{supp}(D')$, $D'$ is not a winning divisor, which is a contradiction to $D' \sim D$.
\end{proof}

\begin{figure}[hbt]
\centering
\includegraphics[scale=0.3]{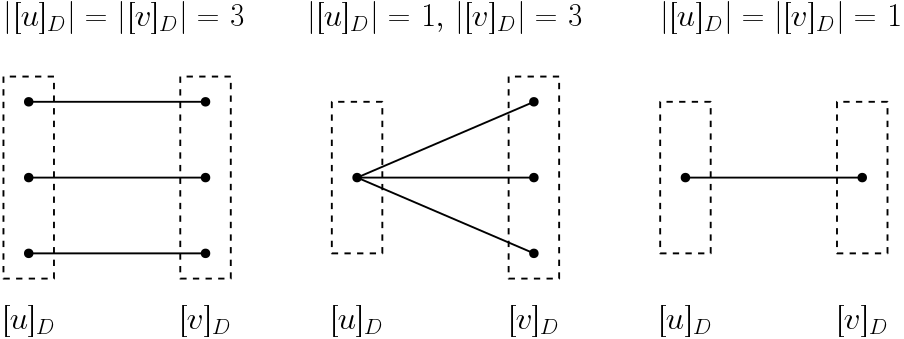}
\caption{Three possible edge cases for Lemma \ref{lemma:threeedges}}
\label{fig:edgecases}
\end{figure}

Now, we again consider the equivalence classes $[v]_D$ of $V(G)$ under the relation $\sim_D$.  Recall from Lemma \ref{lemma:threeedges} that if $uv\in E(G)$ and $[u]_D\neq [v]_D$, then there are exactly three edges between the vertices in $[u]_D$ and in $[v]_D$.  By Lemma \ref{lemma:1or3}, we have $|[v]_D| = 1$ or $|[v]_D| = 3$ for each vertex $v \in V(G)$. The cases depicted in Figure \ref{fig:edgecases} are thus the only possible edge configurations between equivalence classes. Furthermore, note that the third case in Figure \ref{fig:edgecases} is impossible because we cannot send all three chips along a single edge at once.

Define $\sigma : V(G) \to V(G)$ to be the map which permutes the vertices of each equivalence class $[v]_D$ in the following fashion: Choose a vertex $v_1 \in V(G)$ such that $|\text{supp}(D_{v_1})| = 3$. Such a vertex must exist because otherwise, since our graph is connected and not a single vertex, we would be in the third case of Figure \ref{fig:edgecases}. For the unique vertices $v_1, v_2, v_3 \in [v_1]_D$, let $\sigma(v_1) = v_2$, $\sigma(v_2) = v_3$, and $\sigma(v_3) = v_1$. For each edge from a vertex in $[v_1]_D$ to a vertex in another equivalence class $[u_1]_D$, define $\sigma$ as follows. If $e = v_1u_1 \in E(G)$ with $|\text{supp}(D_{u_1})| = 3$, then let $\sigma(u_1) = u_2$ where $u_2 \in [u_1]_D$ is the unique vertex such that $\sigma(v_1)u_2 \in E(G)$. Then we must have $\sigma(u_2) = u_3$ where $u_3$ is the unique vertex with $\sigma(v_2)u_3 \in E(G)$. On the other hand, if $|\text{supp}(D_{u_1})| = 1$, then let $\sigma$  act as the identity on $u_1$. 

Let this process, where vertex classes induce orderings on their adjacent vertex classes, propagate outwards. If we reach a situation where a vertex class with one vertex induces an order on a vertex class with three vertices, pick some arbitrary ordering on those three vertices and define $\sigma$ accordingly. We will show that the order chosen does not matter, and that this process provides us with our desired automorphism.

\begin{prop}
\label{prop:cyclic}
If $G$ satisfies the zero-three condition, then the map $\sigma$ is a cyclic automorphism of order $3$ that does not fix any edge of $G$.
\end{prop}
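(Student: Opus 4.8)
The plan is to verify that the map $\sigma$ constructed just before the proposition is (i) well-defined, (ii) a bijection on $V(G)$, (iii) a graph automorphism, (iv) of order exactly $3$, and (v) fixes no edge; the zero-three condition enters precisely to rule out the mixed configuration where an order-$3$ class is adjacent to an order-$1$ class, so that well-definedness of the propagation is automatic.

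\medskip

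\noindent\textbf{Setup and well-definedness.} First I would record the structural consequence of the zero-three condition: if $D$ is the witnessing divisor, then by Lemma~\ref{lemma:1or3} every equivalence class $[v]_D$ has size $1$ or $3$, and I claim no size-$3$ class is adjacent to a size-$1$ class. Indeed, if $[u]_D$ has size $1$ with $D_u = 3(u)$ and there is an edge from $u$ to a size-$3$ class $[v]_D = \{v_1,v_2,v_3\}$, then by Lemma~\ref{lemma:threeedges} the three edges between $[u]_D$ and $[v]_D$ all emanate from $u$, and $|E(u,[v]_D)| = D_u(u) = 3$; running Dhar from any vertex outside these two classes (using $3$-vertex-connectivity as in Lemma~\ref{lemma:1or3}) and pushing chips toward $[v]_D$ produces the divisor $(v_1)+(v_2)+(v_3)$ where $v_1,v_2,v_3$ share the common neighbor $u$ — wait, sharing a common neighbor is not the same as sharing an edge, so instead I would argue directly: the zero-three condition forbids a rank-$1$ representative $(a)+(b)+(c)$ with $a,b,c$ distinct but not pairwise sharing $3$ edges, and since $G$ is simple, three distinct vertices never share $3$ edges, so in fact the zero-three condition forces that \emph{every} rank-$1$ degree-$3$ effective divisor equivalent to $D$ has support of size $1$. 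Combined with the fact (noted in the text, using connectivity and that $G$ is not a single vertex) that a size-$3$ class must exist, this means \emph{all} classes have size $3$. This is the cleanest route: the zero-three condition plus simplicity collapses the case analysis so that only the first configuration in Figure~\ref{fig:edgecases} occurs, every class has size $3$, and every class induces a genuine bijective ordering on each adjacent class. I would then check that the propagation of orderings is consistent — i.e.\ traversing two different edge-paths between the same pair of classes induces the same ordering — by the uniqueness statements of Lemma~\ref{lemma:threeedges} (three edges, $D_u(u)=1$ for each $u$ in a size-$3$ class) together with the fact that the underlying quotient structure is a tree-like bijective cover; formally, since $\varphi: G \to G/{\sim_D}$ is the degree-$3$ harmonic morphism of Theorem~\ref{thm:multigraph} onto a tree, the orderings are determined globally by tracking fibers, so $\sigma$ is well-defined.

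\medskip

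\noindent\textbf{Bijection and automorphism.} Next I would observe $\sigma$ is a bijection: on each size-$3$ class it acts as a $3$-cycle, and these cycles are coherent across adjacent classes by construction, so $\sigma$ is a product of disjoint $3$-cycles on $V(G)$. To show $\sigma$ is a graph automorphism I must check it preserves adjacency. Within a class there are no edges (by simplicity and the definition of $\sim_D$ — adjacent vertices in the same class would give multiedges in $G/{\sim_D}$, or more carefully, Lemma~\ref{lemma:threeedges} handles only inter-class edges, so I'd argue a size-$3$ class $\{v_1,v_2,v_3\}$ with $D_v = (v_1)+(v_2)+(v_3)$ cannot have $v_iv_j \in E(G)$: firing to obtain $D_{v_i}$ from $D_{v_j}$ would require... — actually the right statement is that edges inside a class are allowed a priori, but $\sigma$ cyclically permutes the class so it suffices that $\sigma$ preserves the edge multiset among $\{v_1,v_2,v_3\}$, which it does since the class is a single orbit and by symmetry of the three chips any edge pattern is $\sigma$-invariant). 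For inter-class edges: if $e = uv$ with $u \in [u]_D$, $v \in [v]_D$ distinct classes, then by Lemma~\ref{lemma:threeedges} there are exactly three edges between these classes, one out of each vertex of $[u]_D$ (since each $D_u(u) = 1$), hence a bijection $[u]_D \to [v]_D$; the ordering propagation is defined \emph{precisely} so that $\sigma$ intertwines this bijection, i.e.\ $\sigma(u)\sigma(v) \in E(G)$. Thus $\sigma$ sends edges to edges, and being a bijection on a finite edge set, it is an automorphism.

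\medskip

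\noindent\textbf{Order $3$ and no fixed edge.} Since $\sigma$ restricted to each (nonempty, size-$3$) class is a $3$-cycle, $\sigma^3 = \mathrm{id}$ and $\sigma \neq \mathrm{id}$, so $\sigma$ has order exactly $3$. For the no-fixed-edge claim: an edge $e$ with $\sigma(e) = e$ would have $\{\sigma(u),\sigma(v)\} = \{u,v\}$; since $\sigma$ has no fixed vertices (all orbits have size $3$), we'd need $\sigma(u) = v$ and $\sigma(v) = u$, forcing $\sigma^2(u) = u$, contradicting that $u$ lies in a $3$-cycle of $\sigma$. Hence $\sigma$ fixes no edge. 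I expect the \textbf{main obstacle} to be the well-definedness of the propagation — pinning down rigorously that the ordering induced on a class is independent of which neighboring class we propagate from, and that no size-$1$/size-$3$ adjacency arises. The zero-three condition is exactly the hypothesis that neutralizes this difficulty by forcing every class to have size $3$; without it one genuinely gets the mixed configurations and $\sigma$ need not be well-defined, which is why the proposition is stated with that hypothesis.
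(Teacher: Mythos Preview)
Your argument rests on a misreading of the zero-three condition. The phrase ``$a,b,c$ share $3$ edges'' means that the three vertices together span three edges---i.e.\ they form a triangle---not that each pair is joined by three parallel edges. In a simple graph triangles certainly exist, so your claim that ``three distinct vertices never share $3$ edges'' is false. You also drop the ``$0$'' alternative when you paraphrase the hypothesis as forbidding ``$(a)+(b)+(c)$ with $a,b,c$ distinct but not pairwise sharing $3$ edges.'' With the condition read correctly, size-$1$ classes are \emph{not} excluded: in $K_4$ (simple, $3$-vertex-connected, gonality $3$, zero-three condition satisfied) one has $(v_1)+(v_2)+(v_3)\sim 3(v_4)$, so $[v_4]_D$ has size $1$ while $[v_1]_D$ has size $3$.

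This error propagates through the rest of the proposal. The genuine role of the zero-three condition is to guarantee that the $3$-cycle $\sigma$ preserves the \emph{internal} edge structure of each size-$3$ class: if $\{v_1,v_2,v_3\}$ spanned exactly one or two edges, the map $v_1\mapsto v_2\mapsto v_3\mapsto v_1$ would fail to send edges to edges, and your ``symmetry of the three chips'' remark does not address this. Your no-fixed-edge argument also breaks, since it assumes $\sigma$ has no fixed vertices, whereas size-$1$ classes are fixed points of $\sigma$; the paper instead argues that no edge can be fixed because two size-$1$ classes are never adjacent. Finally, on well-definedness: your idea of invoking the tree structure of $G/{\sim_D}$ already established in Theorem~\ref{thm:multigraph} is sound in spirit and would indeed shortcut the paper's Dhar-burning argument for consistency of the propagation, but you must still accommodate the arbitrary ordering choices made when propagating past a size-$1$ class---a case you have incorrectly assumed away.
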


\begin{proof}
First consider the action of $\sigma$ on a triple $v_1,v_2,v_3$ where $D\sim(v_1)+(v_2)+(v_3)$ and $v_1,v_2,v_3$ are all distinct.  We are assuming our graph satisfies the zero-three condition, so the map $\sigma$ mapping $v_1$ to $v_2$ to $v_3$ to $v_1$ preserves the connectivity of $v_1,v_2,$ and $v_3$, since either all share edges or none share edges.

Since our graph $G$ is connected, the propagation process induces an order on each vertex class in $G$. We now argue that we never run into the problem that the induced orderings are incompatible with each other. Suppose for the sake of contradiction that we have a vertex class $[v]_D$ with one ordering induced by an adjacent class $[u]_D$ and another ordering induced by an adjacent class $[w]_D$. It is clear that $[u]_D \neq [w]_D$ and that $| [v]_D | = 3$. However, this implies that there are at least two paths from each vertex in our original vertex class $[v_1]_D$ into $[v]_D$. Consider the divisor $D_{v}$ with $[v]_D \in \text{supp}(D_v)$, and begin Dhar's burning algorithm at our starting vertex from $[v_1]_D$.  We will show that no matter how the algorithm runs, we reach a contradiction, either to $r(D)\geq 1$ or to $[u]_D\neq [w]_D$.

Let $v_a,v_b,v_c$ be the three vertices in $[v]_D$. First, assume that at least one vertex in $[v]_D$ burns, say $v_a$.  Since $G$ is $3$-vertex-connected, it is still connected after removing $v_b$ and $v_c$, so every other vertex in $G$ must burn.  We also know that $\deg(v_b)$ and $\deg(v_c)$ are both at least $3$, and so these vertices are adjacent to at least two burning vertices.  Since each has one chip, both of these vertices (and thus the entire graph) will burn. This is a contradiction, since $r(D)\geq 1$.

Now assume none of $v_a,v_b,$ and $v_c$ burns. Since there are two burning edges coming into $[v]_D$, one from $[u]_D$ and one from $[w]_D$, these burning edges must be incident to different vertices among $v_a$, $v_b$, and $v_c$; otherwise one of these vertices would burn. When the whole graph does not burn, Dhar's burning algorithm fires chips from all unburnt vertices, which moves a chip to $[u]_D$ and a chip to $[w]_D$. This yields a rank $1$ divisor of degree $3$ with support in both $[u]_D$ and $[w]_D$, a contradiction to $[u]_D\neq [w]_D$ by Lemma \ref{lemma:distinctorbits}.

Having reached a contradiction in all cases, we know that the propagation process can never lead to incompatible orderings. Notice also that if $e = uv \in E(G)$, then $\sigma^{-1}(u)\sigma^{-1}(v) \in E(G)$ because $\sigma^{-1}(u) = \sigma^2(u)$ and $\sigma^{-1}(v) = \sigma^2(v)$. Hence, we have shown that $\sigma$ is an automorphism. By definition, $\sigma$ is cyclic and we have already demonstrated that $\sigma$ has order $3$. Finally, we see that $\sigma$ does not fix any edge of $G$ because we have already shown that we cannot have an edge between two equivalence classes with one vertex each (recall that the third edge case in Figure \ref{fig:edgecases} is impossible).
\end{proof}

We may now define the same quotient morphism $\varphi: G \to G / \sim_D$ as in Section \ref{section:multi}. However, notice that our equivalence classes of $V(G)$ can now be viewed as orbits under the action of $\sigma$. Thus, $G / \sim_D = G / \sigma$.

\begin{prop}
\label{prop:quotient}
The quotient morphism $\varphi: G \to G / \sigma$ is harmonic and nondegenerate. Moreover, $G/\sigma$ is a tree.
\end{prop}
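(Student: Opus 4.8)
The plan is to exploit the observation made just before the statement: the orbits of $\sigma$ are exactly the equivalence classes $[v]_D$ of $\sim_D$, so $G/\sigma$ is literally the quotient graph $G/\sim_D$ and $\varphi$ is the same quotient morphism built in Section \ref{section:multi}. Hence this proposition is the simple-graph incarnation of the $(1)\Rightarrow(2)$ direction of Theorem \ref{thm:multigraph}, and I would essentially reuse that argument. First I would check that the hypotheses line up: a simple, $3$-vertex-connected graph is $3$-edge-connected (since $\kappa(G)\leq\eta(G)$) and has $|V(G)|\geq 4$, so in particular $|V(G)|>3=\deg(D)$; thus Lemmas \ref{lemma:distinctorbits} and \ref{lemma:threeedges} apply to $G$ and $D$, and since $G$ is connected with more than one $\sim_D$-class there is at least one edge joining distinct classes.

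For harmonicity and non-degeneracy I would invoke Lemma \ref{lemma:threeedges}: for any vertex $v$ and any edge class $[e]_D$ of $G/\sigma$ incident to $[v]_D$, the number of edges of $G$ at $v$ mapping to $[e]_D$ equals $|E(v,[w]_D)|=D_v(v)$, where $[w]_D$ is the other endpoint class. Since this count is $D_v(v)$ regardless of which incident edge class is chosen, $m_\varphi(v)$ is well defined and $\varphi$ is harmonic; since $v\in\text{supp}(D_v)$ gives $D_v(v)>0$ for every $v$, the morphism is non-degenerate; and summing the multiplicities over an orbit $[v]_D$ yields $\deg(\varphi)=\sum_{v\in[v]_D}D_v(v)=\deg(D_v)=3$.

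To show $G/\sigma$ is a tree I would argue via the pullback map exactly as in the proof of Theorem \ref{thm:multigraph}. For a vertex $x$ of $G/\sigma$ one has $\varphi^*((x))=\sum_{v\in[x]_D}m_\varphi(v)\cdot(v)=\sum_{v\in[x]_D}D_v(v)\cdot(v)$, and the uniqueness in Lemma \ref{lemma:distinctorbits} identifies $D_v=D_x$ for each $v\in[x]_D$, so $\varphi^*((x))=D_x\sim D$. Therefore $\varphi^*((x))\sim\varphi^*((y))$ for all vertices $x,y$ of $G/\sigma$; the injectivity of the induced map $\overline\varphi:\text{Jac}(G/\sigma)\to\text{Jac}(G)$ (Theorem 4.13 of \cite{bn2}) forces $(x)\sim(y)$ in $G/\sigma$, and Lemma 1.1 of \cite{bn2} concludes that $G/\sigma$, having all vertices linearly equivalent, is a tree.

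The hard part here is bookkeeping rather than a new idea: one must be careful that $G/\sigma$ and $G/\sim_D$ genuinely agree as graphs (on edges as well as on vertices), so that ``the same'' morphism $\varphi$ is being analyzed, and that the simple plus $3$-vertex-connected hypothesis is at least as strong as what the Section \ref{section:multi} argument used, namely $3$-edge-connectedness together with $|V(G)|>3$. Given the earlier lemmas, no step requires genuinely new work.
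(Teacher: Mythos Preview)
Your proposal is correct and takes essentially the same approach as the paper: identify $G/\sigma$ with $G/\sim_D$, use Lemma~\ref{lemma:threeedges} to read off $m_\varphi(v)=D_v(v)$ for harmonicity, non-degeneracy, and degree~$3$, and then repeat the pullback/injectivity argument from the proof of Theorem~\ref{thm:multigraph} to conclude $G/\sigma$ is a tree. The paper's version differs only cosmetically, splitting the harmonicity check into the two cases $|[v]_D|=1$ and $|[v]_D|=3$ supplied by Lemma~\ref{lemma:1or3}, and then explicitly deferring the tree argument to Theorem~\ref{thm:multigraph}.
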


The proof of this proposition will be similar to an argument from  the proof of Theorem \ref{thm:multigraph}, when we showed that the quotient map from $G$ to $G/\sim_D$ was harmonic of degree $3$, and that $G/\sim_D$ was a tree.

\begin{proof} Since $\sigma$ is an automorphism, for each vertex $v \in V(G)$ such that $v \in [v]_D$, there exists some edge $e \in E(G)$ such that $v \in e$ and $\varphi(e) = [e]_D$. (Otherwise $G/\sigma$ would not be connected.) Hence, $\varphi$ is non-degenerate. To show that $\varphi$ is harmonic, fix a vertex $v \in V(G)$ and consider all edges $[e]_D \in E(G/\sigma)$ such that $\varphi(v) = [v]_D \in [e]_D$. If $\left| [v]_D \right| = 3$, then $| \{e \in E(G) : v \in e, \varphi(e) = [e]_D \}| = D_v(v) = 1$, no matter which edge $e$ we pick. On the other hand, if $\left| [v]_D \right| = 1$, then $| \{e \in E(G) : v \in e, \varphi(e) = [e]_D \}| = D_v(v) = 3$, which is also independent of our choice of $e$. Hence, $\varphi$ is harmonic. 

For any given edge $[e]_D \in E(G/\sigma)$, 
\[
|\varphi^{-1}([e]_D)| = \sum_{v \in [v]_D} |\{ e \in E(G) : v \in e, \varphi(e) = [e]_D \}| = \sum_{v \in [v]_D} D_v(v),
\]
for any choice of $[v]_D$ such that $[v]_D \in [e]_D$. Thus, $\varphi$ is a degree $3$ morphism. The same argument from the proof of Theorem \ref{thm:multigraph} shows that $G / \sigma$ is a tree. 
\end{proof}

We are now ready to prove Theorem \ref{thm:simpletheorem}.

\begin{proof}[Proof of Theorem \ref{thm:simpletheorem}]
We already have the equivalence of (1) and (2) from Theorem \ref{thm:multigraph}.  
The implication $(1) \implies (3)$ follows from Proposition \ref{prop:cyclic}.

For $(3) \implies (2)$ (for which we will not assume $G$ satisfies the zero-three condition),  suppose that there exists a cyclic automorphism $\sigma:G\rightarrow G$ of order $3$ that does not fix any edge of $G$, such that $G/\sigma$ is a tree.  We wish to show that  $\varphi : G \to G/\sigma$ is a non-degenerate harmonic morphism of degree $3$.  The argument for this is nearly identical to the argument from Proposition~\ref{prop:quotient}.  However, the proof of harmonicity requires a few additional details. 

Fix a vertex $v \in V(G)$ and consider all edges $[e] \in E(G/\sigma)$ such that $\varphi(v) = [v] \in [e]$. Since $\sigma$ has order $3$, either $|\varphi^{-1}([v])|=3$ or $|\varphi^{-1}([v])|=1$. If $|\varphi^{-1}([v])|=3$, then we claim that $| \{e \in E(G) : v \in e, \varphi(e) = [e] \}| = 1$, no matter which edge $e$ we pick.  To see this, suppose there are multiple edges $e_1=vw_1$ and $e_2=vw_2$ in this set. Then, without loss of generality, $\sigma(e_1) = e_2$ and since $v$ is not fixed under $\sigma$, we must have $\sigma(v) = w_2$. But then $\varphi$ sends $v$ and $w_2$ to the same point, so $e_2$ is mapped to a point rather than an edge, a contradiction. On the other hand, if $|\varphi^{-1}([v])|=1$, then we claim that $| \{e \in E(G) : v \in e, \varphi(e) = [e] \}| = 3$, no matter which edge $e$ we pick. This is because $\sigma$ fixes $v$ but does not fix any of its incident edges. Hence, these edges must cycle around $v$ with order $3$. We have now shown that $\varphi$ is harmonic. By the same computation as in Proposition \ref{prop:quotient}, $\varphi$ also has degree $3$. \end{proof}

Unlike Theorem \ref{thm:multigraph}, which only relates divisorial and geometric gonalities, Theorem \ref{thm:simpletheorem} allows us to determine divisorial gonalities using information about graph automorphisms. For example, consider the Frucht graph in Figure \ref{fig:frucht}, which is the smallest trivalent graph with no nontrivial automorphisms \cite{frucht}.  It can be computationally verified that the divisor depicted in Figure \ref{fig:frucht} is indeed a winning divisor, so the Frucht graph has divisorial gonality at most $4$.  It is $3$-vertex-connected, and since it has no cyclic automorphisms of order $2$ or $3$, so either its gonality is $4$, or its gonality is $3$ and it does not satisfy the zero-three condition.  We computationally run through all possible support sets for divisors of degree $3$ with exactly $1$ or $2$ edges between the three vertices, and verify none have positive rank.  Thus the Frucht graph has gonality $4$.

\begin{figure}[ht]
    \centering
    \includegraphics[scale=0.25]{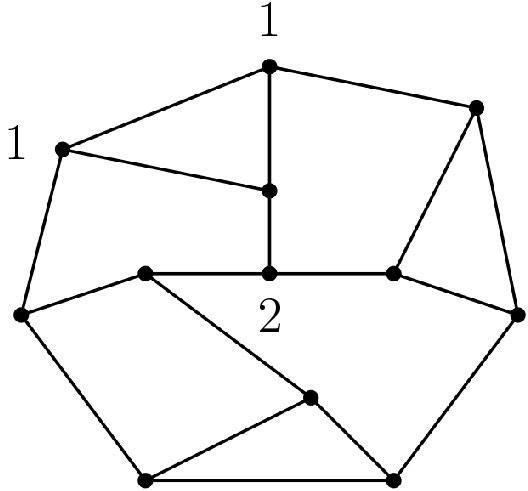}
    \caption{The Frucht graph with gonality $4$}
    \label{fig:frucht}
\end{figure}

Having proved both Theorem \ref{thm:multigraph} and Theorem \ref{thm:simpletheorem} we have established our main result, Theorem \ref{thm:maintheorem}.  We now ask whether it can be strengthened.  In particular, since the condition of being $3$-vertex-connected is relatively strong, we might wonder whether a weaker condition, such as being trivalent, is sufficient for Theorem \ref{thm:maintheorem} to hold. The next result shows that this is not the case.

\begin{prop}
\label{prop:twoconnect}
If $G$ is a simple, bridgeless trivalent graph that is not $3$-vertex-connected, then $\text{gon}(G) \geq 4$. 
\end{prop}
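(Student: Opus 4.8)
Since $G$ is simple and trivalent, Lemma~\ref{lemma:minvalence} already gives $\text{gon}(G)\ge 3$, so it suffices to rule out $\text{gon}(G)=3$; note that the treewidth bound of Lemma~\ref{ref:treewidth} does not suffice on its own here, as such a $G$ can have treewidth exactly $3$. So the plan is to assume, for contradiction, that $D$ is effective with $\deg(D)=3$ and $r(D)=1$, and to derive a contradiction from the vertex $2$-cut.

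First I would pin down the structure of the cut. In a trivalent graph any cut vertex $v$ leaves some component of $G-v$ joined to $v$ by a single edge, which is a bridge; so $G$ is $2$-vertex-connected, and we fix a vertex cut $\{x,y\}$. Let $C_1,\dots,C_k$ ($k\ge 2$) be the components of $G-\{x,y\}$. Since $G$ is bridgeless each $C_i$ is joined to $\{x,y\}$ by at least two edges, and each $|C_i|\ge 2$ (a single vertex would have degree $\le 2$ in a simple graph); comparing with $\deg(x)+\deg(y)=6$ then forces $k\in\{2,3\}$ and leaves only a short list of possibilities for how the edges at $x$ and $y$ split among the $C_i$ and between $x$ and $y$. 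In every case there is a component $C$ joined to $\{x,y\}$ by exactly two edges; bridgelessness forces these to be $u_1x$ and $u_2y$ with $u_1\ne u_2$ in $C$, and then simplicity forces $|C|\ge 4$. The point of isolating $C$ is that it is a bottleneck: firing the set $C$ moves exactly one chip off $u_1$ onto $x$ and one chip off $u_2$ onto $y$.

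The heart of the argument is to produce a vertex $v$ with $\text{Red}_v(D)(v)=0$, contradicting Lemma~\ref{lemma:vredrank}. I would pick an interior vertex $w$ of $C$ — one all three of whose edges lie inside $C$, which exists since $|C|\ge 4$ — and put $D_w:=\text{Red}_w(D)$, so $D_w$ is effective with $D_w(w)\ge 1$ and hence at most two further chips. Since there are at least four vertices outside $C$ (namely $x$, $y$, and the vertices of the other components), at least two of them carry no chip of $D_w$; fix such a chip-free vertex $z\notin C$, chosen inside some component $C_i\ne C$ (so that $z$ is not adjacent to $C$) whenever that is possible. Running Dhar's burning from $z$ on $D_w$: the fire first consumes everything outside $C$ (each component other than $C$ is joined to $\{x,y\}$ by at least two edges, and the at most two surviving chips cannot stop the fire from clearing it and then reaching $x$ and $y$; if necessary, $x$ or $y$ only burns once its bottleneck edge $u_1x$ or $u_2y$ has ignited), and the two bottleneck edges then carry the fire into $C$, where $w$ with its remaining chips is the only possible firebreak, so all of $C$ burns as well — possibly after re-firing the residual unburnt set, which stays confined to the interior of $C$ so that $z$ does not pick up chips. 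With the whole graph burnt, Corollary~\ref{lemma:dhars} applied to $D_w\sim D$ gives $r(D)<1$, a contradiction.

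The step I expect to be the real obstacle is exactly this Dhar's-burning bookkeeping, which has to be done for each of the handful of shapes the $2$-cut can take (edge $xy$ present or not, $k=2$ versus $k=3$, and the various sizes forced on the components), and in each case one must exhibit the starting vertex $z$ explicitly, check it lies outside the support, and track the order in which vertices burn — including the subcases where $x$ or $y$ only burns after its bottleneck edge does, or where the burn requires one re-firing before the graph is fully consumed (and one must then check $z$ picks up no chip). Keeping the cases parallel by always reducing $D$ toward an interior vertex $w$ of the light component $C$ confines the surviving chips to at most two vertices away from $w$; the delicate point is then to verify in each case that once the outside of $C$ has burnt, the two edges $u_1x$ and $u_2y$ genuinely suffice to ignite all of $C$, which is where $|C|\ge 4$ and the precise way $C$ attaches to $\{x,y\}$ get used.
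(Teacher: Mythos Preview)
Your structural claim that every $2$-vertex-cut $\{x,y\}$ has a component joined to it by exactly two edges is false. Take two disjoint triangles on $\{a,a_1,a_2\}$ and $\{b,b_1,b_2\}$, and add vertices $x,y$ with edges $xa$, $xb_1$, $xb_2$, $ya_1$, $ya_2$, $yb$. This graph is simple, trivalent, bridgeless, and $\{x,y\}$ is a $2$-cut whose two components are each attached by three edges, so the light component $C$ you want does not exist. The paper sidesteps this by using edge-connectivity instead: a trivalent graph is $3$-vertex-connected if and only if it is $3$-edge-connected, so $G$ has a $2$-edge-cut, and that cut directly gives a partition $H_1\cup H_2$ with exactly two crossing edges --- no search for a light component is needed.

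The second difference is how the divisor is prepared before burning. You reduce at an interior vertex $w$ of $C$, which leaves the two remaining chips uncontrolled; in particular your assertion that the unburnt set stays confined to the interior of $C$ is not justified, since a leftover chip on $u_1$ or $u_2$ lets a firing step push a chip out to $x$ or $y$ and possibly onward. The paper instead first locates an equivalent effective divisor $D'$ with exactly two chips on $H_1$ and one on $H_2$: chips can only cross the $2$-edge-cut two at a time, so any effective chip-firing sequence moving all three chips across must pass through a $2{+}1$ state. With the chip positions pinned down, the burn is short: either removing the two $H_1$ chip-vertices leaves $G$ connected and a burn started on the $1$-chip side consumes everything in one pass, or those two vertices form a further cut and a brief subcase finishes the job.
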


\begin{proof}
First we note that a trivalent graph is $3$-vertex-connected if and only if it is $3$-edge-connected \cite[Theorem 5.12]{cz},  so our graph $G$ is \textit{not} $3$-edge-connected.  Since $G$ is also bridgeless, it must be exactly $2$-edge-connected. This means that there exists some way to partition $G$ into two subgraphs, $H_1$ and $H_2$, connected by exactly two edges, as illustrated in Figure \ref{fig:twobridge}. Suppose for the sake of contradiction that there exists $D \in \text{Div}_+(G)$ with $\text{deg}(D) = 3$ and $r(D) = 1$. Then there exists some divisor $D' \sim D$ such that $D'$ has exactly two chips on $H_1$ and one chip on $H_2$: we must be able to move at least one chip onto both subgraphs, and since there are only two edges connecting the subgraphs, we can move at most two chips in a single subset firing move.

\begin{figure}[ht]
    \centering
    \includegraphics[scale=0.2]{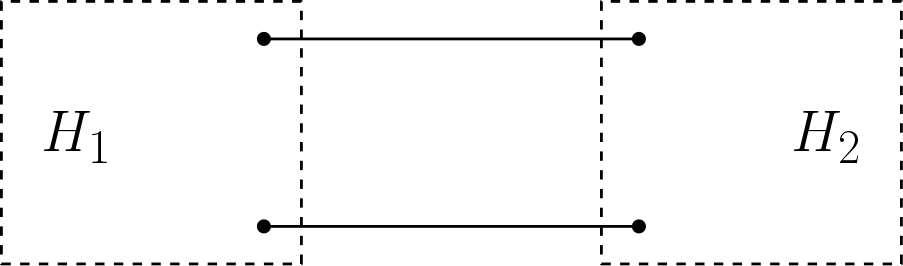}
    \caption{Simple, trivalent, exactly $2$-edge-connected graph}
    \label{fig:twobridge}
\end{figure}

Let $v_1, v_2 \in H_1$ and $v_3 \in H_2$ be the vertices such that $\text{supp}(D') = \{v_1,v_2,v_3\}$. We will split into two cases: first where removing $v_1$ and $v_2$ disconnects the graph, and second where removing them leaves the graph connected.  We split the former case into two subcases, depending on the relationship of $v_1$ and $v_2$ to a connected component of the disconnected graph which does not contain $v_3$.

Assume that removing $v_1$ and $v_2$ disconnects the graph into at least two connected components. A trivalent $2$-edge-connected graph is also $2$-vertex-connected, so it follows that $v_1 \neq v_2$. Let $H_3$ be one of the connected components which does \textit{not} contain $v_3$. This implies that there exists at least one edge incident to both $v_1$ and some vertex in $H_3$, and that the same holds for $v_2$. Since each vertex is trivalent, by symmetry, we have at most two edges connecting each vertex in $\{v_1,v_2\}$ with vertices in $H_3$. 

First we deal with the subcase that there exist at least two edges incident to either $v_1$ or $v_2$ entering $H_3$, and at least one edge incident to the other vertex entering $H_3$. Notice that we are at a state where we cannot fire onto $H_3$ without inducing debt (see the bottom graph in Figure \ref{fig:disconnect}). Choose a vertex $v_0\in H_3$. Since we are unable to fire without inducing debt, at least one of our two vertices has fewer chips than edges incident to $H_3$. Hence, if we begin Dhar's burning algorithm at $v_0$, everything in $H_3$ must burn, including at least one of the two vertices with chips. This forces the other vertex with a chip to burn as well. Since we have only one other vertex with exactly one chip, this implies that the whole graph burns.

Now we handle the subcase that there exist exactly one edge incident to $v_1$ and exactly one edge incident to $v_2$ entering $H_3$. Then there exist two vertices $v_1', v_2'\in H_3$ which are the endpoints of these edges (see the top graph in Figure \ref{fig:disconnect}). We know that $v_1'\neq v_2'$ because $G$ is $2$-vertex-connected. Fire onto $H_3$, moving the two chips from $\{v_1, v_2\}$ onto $\{v_1',v_2'\}$. Suppose that we can continue firing in this manner, \ie moving chips onto two vertices which are each connected by exactly one edge to the rest of the graph. Since our graph is finite, this process must terminate at some point. If we are able to hit all vertices in $H_3$, we have a contradiction because this implies that at least two vertices in $H_3$ are \textit{not} trivalent. Before hitting all of the vertices in $H_3$, we reach a state as in the previous case, with at least two edges incident to either of the two vertices entering the subgraph of $H_3$ that we are unable to fire onto. (Notice that we cannot fire from either vertex separately either, because this would imply the existence of a bridge.)

We initially assumed that $r(D) = 1$, so we have reached a contradiction. Thus, we know that removing the set $\{v_1, v_2\}$ cannot disconnect the graph.

\begin{figure}[ht]
    \centering
    \includegraphics[scale=0.3]{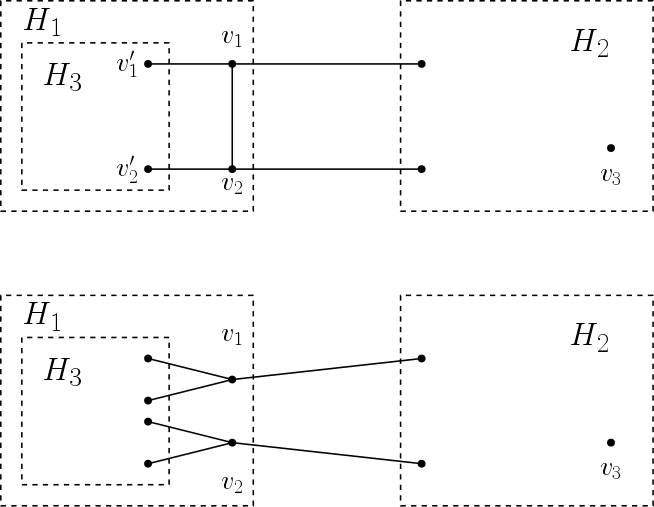}
    \caption{Two cases: removal of $\{v_1,v_2\}$ disconnects graph}
    \label{fig:disconnect}
\end{figure}

Now we assume that removing $v_1$ and $v_2$ does not disconnect the graph. Choose a vertex $v_0' \in H_2$ such that $v_0' \neq v_3$ (such a vertex exists due to trivalence). If we begin Dhar's burning algorithm at $v_0'$, we find that the entirety of $H_2$ must burn, since there exists only one vertex with a single chip in $H_2$. The fire then spreads across the two edges incident to $H_1$. Since removing $v_1$ and $v_2$ does not disconnect the graph, the fire must burn every vertex in $H_1$ except possibly $v_1$ and $v_2$. However, because our graph is simple, there exists at most one edge between $v_1$ and $v_2$, implying that each must have at least two incident burning edges. Hence, the whole graph burns, implying that $r(D) < 1$. Again, this is a contradiction.  We conclude that the gonality of the graph is at least $4$.
\end{proof}

It is worth noting that this result does not extend to multigraphs. Figure \ref{fig:gon3multi} depicts an example of a graph which is bridgeless, trivalent, and not $3$-vertex-connected, but has gonality $3$.

\begin{figure}[ht]
    \centering
    \includegraphics[scale=0.15]{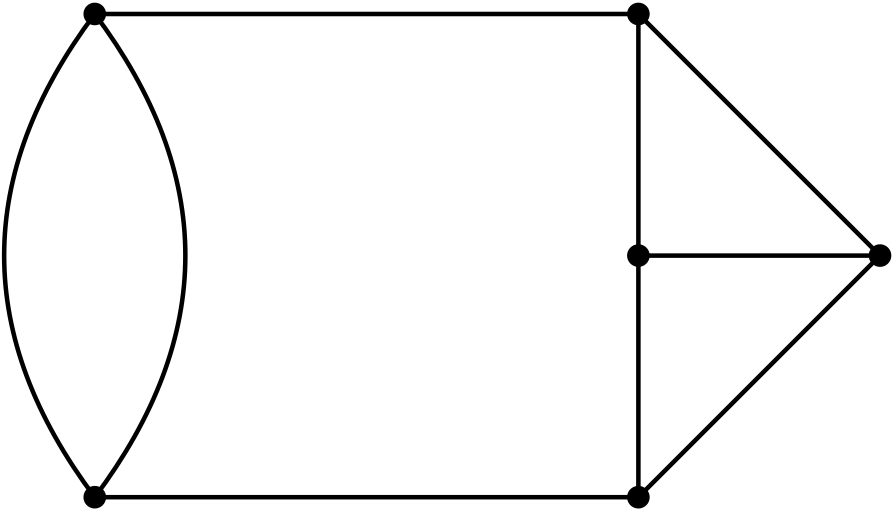}
    \caption{Multigraph $G$ with $\text{gon}(G) = 3$}
    \label{fig:gon3multi}
\end{figure}

\begin{coro}
If $G$ is a simple, bridgeless trivalent graph that is not $3$-vertex-connected, then $\text{ggon}(G) \neq 3$.
\end{coro}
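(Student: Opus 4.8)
The plan is to derive a contradiction from the assumption $\text{ggon}(G)=3$, using the fact that a degree-$3$ non-degenerate harmonic morphism to a tree already forces the divisorial gonality to be at most $3$, which is incompatible with Proposition~\ref{prop:twoconnect}.

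First I would unwind the definition of geometric gonality: if $\text{ggon}(G)=3$, then there exists a non-degenerate harmonic morphism $\varphi\colon G\to T$ with $\deg(\varphi)=3$ and $T$ a tree. Next I would reproduce the argument given for the implication $(2)\implies(1)$ in the proof of Theorem~\ref{thm:multigraph}: fix a vertex $x_0\in T$, form the pullback divisor $D=\varphi^*((x_0))$, observe that $D$ is effective of degree $3$ (Lemma 2.13 in \cite{bn2}), and use that $T$ being a tree forces $(x_0)\sim(\varphi(x))$ for every $x\in V(G)$. Pulling this equivalence back gives $D\sim\varphi^*((\varphi(x)))=m_\varphi(x)\cdot(x)+E$ for an effective divisor $E$, and non-degeneracy of $\varphi$ gives $m_\varphi(x)>0$; hence $r(D)\ge 1$ and $\text{gon}(G)\le\deg(D)=3$. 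The key point I would flag is that this half of the Theorem~\ref{thm:multigraph} argument never uses $3$-edge-connectivity, so it applies verbatim to our merely $2$-edge-connected graph $G$.

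Finally I would invoke Proposition~\ref{prop:twoconnect}, which says that a simple, bridgeless, trivalent graph that is not $3$-vertex-connected has $\text{gon}(G)\ge 4$, directly contradicting $\text{gon}(G)\le 3$; therefore $\text{ggon}(G)\ne 3$. I do not expect a genuine obstacle here; the corollary is essentially a packaging of Proposition~\ref{prop:twoconnect}. The only subtlety worth checking is that the $(2)\implies(1)$ portion of Theorem~\ref{thm:multigraph} is genuinely connectivity-free — all of its content is carried by Lemma 1.1 and Proposition 4.2 of \cite{bn2} together with non-degeneracy of $\varphi$ — so that Proposition~\ref{prop:twoconnect} and the harmonic-morphism construction can simply be placed in opposition.
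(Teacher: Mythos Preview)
Your proposal is correct and follows essentially the same approach as the paper's own proof. The paper also invokes Proposition~\ref{prop:twoconnect} together with the observation that the $(2)\implies(1)$ direction of Theorem~\ref{thm:multigraph} does not require $3$-edge-connectivity; you have simply unpacked that direction in more detail than the paper does.
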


\begin{proof}
By Proposition \ref{prop:twoconnect}, $G$ does not have gonality $3$. Notice that arguing that $(2) \implies (1)$ in the proof of Theorem \ref{thm:multigraph} does not require $3$-edge-connectivity. Hence, there exists no non-degenerate harmonic morphism of degree $3$ from $G$ to a tree.
\end{proof}

\section{Constructing Graphs of Gonality 3}
\label{section:constructions}

In \cite{chan}, Chan presents the following construction for all trivalent, $2$-edge-connected graphs of gonality $2$. Choose a tree $T$ where each vertex $v \in V(T)$ satisfies $\text{val}(v) \leq 3$.
\begin{enumerate}
    \item Duplicate $T$, making two copies $T_1$ and $T_2$.
    \item For each vertex $v_1\in T_1$ with $\text{val}(v_1)\leq 2$, connect it to the matching vertex in $T_2$ with $3-\text{val}(v_1)$ edges.
\end{enumerate}

Every graph constructed in this way is called a \emph{ladder}. By \cite[Theorem 4.9]{chan}, each graph arising from this construction has gonality $2$, and every $2$-edge-connected trivalent graph of gonality $2$ with genus at least $3$ comes from such a construction.

We now provide a similar construction for graphs of gonality $3$. In constrast to the results of \cite{chan}, not every graph of gonality $3$ arises from this construction.  For instance, the complete graph on $4$ vertices $K_4$ does not arise from this construction, even though it is $3$-vertex-connected and simple with gonality $3$; to see this, note that the number of vertices from our construction is always a multiple of $3$.

\begin{proposition}
\label{prop:construction}
Let $T$ be an arbitrary tree that is not a single vertex, and let $S\subset V(T)$ consist of at least two vertices. Construct a graph $\mathscr{T}(T)$ as follows:

\begin{enumerate}
    \item Duplicate $T$ twice, for a total of three copies of $T$. Call these copies $T_1$, $T_2$, and $T_3$.
    \item For each vertex $v_1 \in T_1$ with $v_1\in S$ and its corresponding vertices $v_2 \in T_2$ and $v_3 \in T_3$, connect each pair of vertices with an edge so that all three vertices are connected in a $3$-cycle.
\end{enumerate}
Then $\text{gon}(\mathscr{T}(T))=3$.
\end{proposition}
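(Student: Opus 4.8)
The plan is to prove the two inequalities $\text{gon}(\mathscr{T}(T))\le 3$ and $\text{gon}(\mathscr{T}(T))\ge 3$ separately. One point to keep in mind throughout: $\mathscr{T}(T)$ is in general neither $3$-edge-connected (for instance, if $T$ has a leaf $\ell\notin S$, then the edge at $\ell_1$ is a bridge) nor $3$-vertex-connected, so neither Theorem \ref{thm:multigraph} nor Theorem \ref{thm:simpletheorem} can be quoted off the shelf; instead one reuses the portions of their proofs that use no connectivity hypothesis.

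For the upper bound, consider the permutation $\sigma$ of $V(\mathscr{T}(T))$ that cyclically shifts each vertex of $T_1$ to its copy in $T_2$, each vertex of $T_2$ to its copy in $T_3$, and each vertex of $T_3$ to its copy in $T_1$ (extended to edges in the obvious way). This is a cyclic automorphism of order $3$, and it fixes no edge: the tree edges are cycled among the three copies, and each added triangle $v_1v_2v_3$ (for $v\in S$) is rotated. The quotient $\mathscr{T}(T)/\sigma$ identifies $v_1,v_2,v_3$ for every $v\in V(T)$, sends the triangle edges to vertices, and identifies the three copies of each tree edge, so $\mathscr{T}(T)/\sigma\cong T$, a tree. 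I would then check — by exactly the argument of Proposition \ref{prop:quotient} and of the implication $(3)\Rightarrow(2)$ in Theorem \ref{thm:simpletheorem}, neither of which uses simplicity or connectivity — that the quotient map $\varphi:\mathscr{T}(T)\to T$ is a non-degenerate harmonic morphism of degree $3$; the key local fact is that each vertex $v_i$ has $m_\varphi(v_i)=1$ with respect to every tree edge incident to $\varphi(v_i)$, the triangle edges at $v_i$ being sent to the vertex $\varphi(v_i)$ and contributing nothing. Finally, exactly as in the proof of $(2)\Rightarrow(1)$ of Theorem \ref{thm:multigraph}, the pullback $D=\varphi^{*}((x_0))$ is effective of degree $3$ and satisfies $D\sim m_\varphi(x)\cdot(x)+E$ with $E$ effective for every vertex $x$ (since any two vertices of the tree $T$ are linearly equivalent), so $r(D)\ge 1$ and hence $\text{gon}(\mathscr{T}(T))\le 3$. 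If one prefers no morphisms: fix $v\in S$ and $D=(v_1)+(v_2)+(v_3)$; for an edge $vw$ of $T$, firing the set $A=\{\,x_i : x\in V(T)\setminus C,\ i\in\{1,2,3\}\,\}$, where $C$ is the component of $T-v$ containing $w$, has total outdegree $3$, contributing one chip each from $v_1,v_2,v_3$, so it moves those chips onto $w_1,w_2,w_3$ without debt; iterating along paths in $T$ gives $D\sim(w_1)+(w_2)+(w_3)$ for every $w\in V(T)$, which already places every vertex of $\mathscr{T}(T)$ in the support of an equivalent effective divisor, so $r(D)\ge 1$.

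For the lower bound it suffices to show $\text{gon}(\mathscr{T}(T))\ge 3$, and the cleanest route is to exhibit $K_4$ as a minor: then $\text{tw}(\mathscr{T}(T))\ge 3$ by \cite{bod}, so $\text{gon}(\mathscr{T}(T))\ge\text{tw}(\mathscr{T}(T))\ge 3$ by Lemma \ref{ref:treewidth}. Using the hypothesis $|S|\ge 2$, pick distinct $u,v\in S$ and let $P_i$ be the unique path from $u_i$ to $v_i$ in $T_i$; the three paths are pairwise vertex-disjoint, and each has at least one edge since $u\ne v$. I would take the four branch sets $V(P_1)$, $V(P_2)$, $V(P_3)\setminus\{v_3\}$, and $\{v_3\}$: each is nonempty and connected, and every pair is joined by an edge of $\mathscr{T}(T)$ — the pairs among the first three via the triangle edges $u_1u_2,u_1u_3,u_2u_3$; the set $\{v_3\}$ is joined to $V(P_1)$ via $v_1v_3$, to $V(P_2)$ via $v_2v_3$, and to $V(P_3)\setminus\{v_3\}$ via the edge of $P_3$ meeting $v_3$ — so contracting these branch sets produces $K_4$. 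Combining the two bounds yields $\text{gon}(\mathscr{T}(T))=3$.

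There is no genuinely hard step here; the two things to be careful about are (i) that we must not quote Theorems \ref{thm:multigraph} or \ref{thm:simpletheorem} wholesale, since $\mathscr{T}(T)$ is typically neither $3$-edge- nor $3$-vertex-connected, so it is worth isolating the connectivity-free portions of those proofs; and (ii) selecting the four branch sets for the $K_4$ minor so that all six adjacencies genuinely occur, which is the only place the assumption $|S|\ge 2$ (rather than $|S|\ge 1$) is needed — indeed for $|S|=1$ the graph has genus $1$ and gonality $2$.
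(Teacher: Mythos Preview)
Your proposal is correct and follows essentially the same strategy as the paper: for the upper bound, exhibit the natural degree-$3$ non-degenerate harmonic morphism $\mathscr{T}(T)\to T$ and invoke the connectivity-free direction $(2)\Rightarrow(1)$ of Theorem~\ref{thm:multigraph}; for the lower bound, produce a $K_4$ minor and appeal to Lemma~\ref{ref:treewidth}. The only notable difference is in how the $K_4$ minor is obtained: the paper observes that $\mathscr{T}(T)$ contains $K_2\boxempty K_3$ (the triangular prism), which in turn has $K_4$ as a minor, whereas you build the $K_4$ minor directly from two triangles at $u,v\in S$ joined by the three disjoint tree-paths $P_1,P_2,P_3$. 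Your route is arguably cleaner, since the paper's ``subgraph'' claim literally requires two \emph{adjacent} vertices of $S$, which the hypothesis $|S|\ge 2$ does not guarantee; your path-based branch sets handle the general case without this caveat.
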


In the following proof, we refer to the \emph{Cartesian product} $G\boxempty H$ of two graphs $G$ and $H$.  This is the graph with vertex set $V(G)\times V(H)$, and an edge between $(u,u')$ and $(v,v')$ if and only if $u'=v'$ and $uv\in E(G)$, or $u=v$ and $u'v'\in E(H)$.

\begin{proof}
It is clear that the morphism $\varphi : \mathscr{T}(T) \to T$ which maps corresponding triples of vertices $\{v_1,v_2,v_3\}$ to each other is a non-degenerate harmonic morphism. Notice that arguing that $(2) \implies (1)$ in the proof of Theorem \ref{thm:multigraph} does not require $3$-edge-connectivity. Hence, there exists a divisor $D$ on $\mathscr{T}(T)$ such that $\text{deg}(D) = 3$ and $r(D) \geq 1$, meaning $\textrm{gon}(\mathscr{T}(T))\leq 3$.

Since $S$ contains at least two vertices, the graph $\mathscr{T}(T)$ has $K_2\boxempty K_3$ as a subgraph.  This graph in turn has $K_4$ as a minor, which is the forbidden minor of graphs of treewidth $2$ \cite{bod}.  Thus, $\textrm{gon}(\mathscr{T}(T))\geq \textrm{tw}(\mathscr{T}(T))\geq 3$ by Lemma \ref{ref:treewidth}.  We conclude that $\textrm{gon}(\mathscr{T}(T))=3$.
\end{proof}

\begin{figure}[ht]
    \centering
    \includegraphics[scale=0.35]{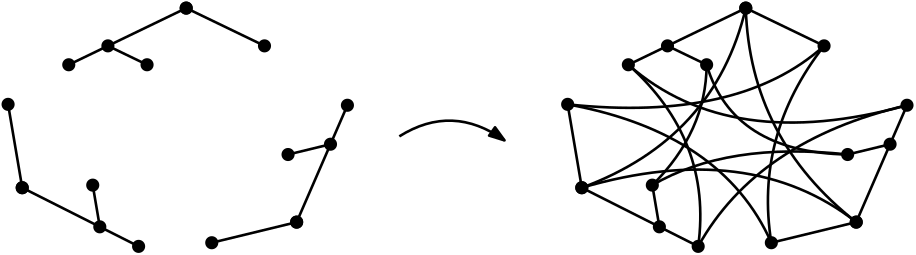}
    \caption{Construction of $\mathscr{T}(T)$}
    \label{fig:trigtree}
\end{figure}

See Figure \ref{fig:trigtree} for an example of the construction, where we choose $S$ consists of all vertices of degree at most $2$.  To instead obtain an at-most-trivalent graph, one could let $S$ be the set of leaves.

\begin{corollary}
If $T$ is a tree with at least two vertices, then $\textrm{gon}(T\boxempty K_3)=3$.
\end{corollary}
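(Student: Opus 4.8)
The plan is to recognize the graph $T\boxempty K_3$ as an instance of the construction $\mathscr{T}(T)$ from Proposition~\ref{prop:construction}, and then invoke that proposition directly. Concretely, I would take the distinguished subset to be $S=V(T)$, the entire vertex set of $T$.

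First I would unwind the definition of the Cartesian product recalled just before the proof of Proposition~\ref{prop:construction}. The vertex set of $T\boxempty K_3$ is $V(T)\times\{1,2,3\}$. Two vertices $(u,i)$ and $(v,j)$ are adjacent precisely when either $i=j$ and $uv\in E(T)$, or $u=v$ and $ij\in E(K_3)$. The first type of edge means that each ``layer'' $V(T)\times\{i\}$ spans an isomorphic copy $T_i$ of $T$; the second type means that for each $v\in V(T)$, the three vertices $(v,1),(v,2),(v,3)$ are pairwise adjacent, i.e.\ they form a $3$-cycle (here one uses that $K_3$ has exactly the three edges of a triangle). Comparing with the two steps in the construction of $\mathscr{T}(T)$, this shows that $T\boxempty K_3$ is exactly the graph $\mathscr{T}(T)$ obtained by duplicating $T$ three times and joining every corresponding triple of vertices into a $3$-cycle --- that is, the construction with $S=V(T)$.

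Then I would check the hypotheses of Proposition~\ref{prop:construction}: since $T$ has at least two vertices, $T$ is not a single vertex, and $S=V(T)$ consists of at least two vertices. The proposition therefore applies and yields $\textrm{gon}(\mathscr{T}(T))=\textrm{gon}(T\boxempty K_3)=3$, completing the proof. I do not expect any real obstacle here: the only point requiring a moment's care is the bookkeeping that identifies the $K_3$-fibers of the Cartesian product with the $3$-cycles added in step (2) of the construction, and everything else is an immediate appeal to Proposition~\ref{prop:construction}.
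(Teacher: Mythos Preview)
Your proposal is correct and follows exactly the paper's approach: set $S=V(T)$, identify $\mathscr{T}(T)$ with $T\boxempty K_3$, and apply Proposition~\ref{prop:construction}. The only difference is that you spell out the Cartesian-product bookkeeping more explicitly than the paper does.
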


\begin{proof} Choosing $S=V(T)$ and performing our construction yields $\mathscr{T}(T)=T\boxempty K_3$, which thus has gonality~$3$.
\end{proof}

We can extend our construction to include certain multigraphs. Notice that we can add arbitrary edges between corresponding triples of vertices (which are already connected via a $3$-cycle) while retaining a graph of gonality $3$. This is because we still have the same non-degenerate harmonic morphism (the added edges are simply contracted) and because treewidth of a multigraph is equal to the treewidth of the underlying simple graph.

We can also generalize this construction somewhat to create graphs of gonality $k> 3$, although we are more constrained in what set of vertices we can choose for $S$.  Make $k$ copies of a tree $T$ that has at least two vertices. For each vertex $v$ of $T$ with $\text{val}(v)\leq k-1$, connect all the $k$ copies of $v$ to each other with $\binom{k}{2}$ edges. (Including some vertices with $\text{val}(v)\geq k$ is also allowable.)   Call the resulting graph $\mathscr{T}(T)$.  Our construction guarantees that each vertex has valence at least $k$, so $\textrm{gon}(\mathscr{T}(T))\geq k$  by Lemma \ref{lemma:minvalence}.  There is a natural harmonic morphism of degree $k$ from $\mathscr{T}(T)$ to $T$, which by the argument from $(2) \implies (1)$ in the proof of Theorem \ref{thm:multigraph} shows that $\textrm{gon}(\mathscr{T}(T))\leq k$. We conclude that  $\textrm{gon}(\mathscr{T}(T))= k$.


\section{Future Directions}

There are many cases of graphs of gonality $3$ that have not yet been covered by our results.  One could consider multigraphs that are not 3-edge-connected, and simple graphs that are neither 3-vertex-connected nor trivalent.  Moreover, all the results results in this paper only hold for combinatorial graphs, as opposed to metric graphs, which have lengths associated to their edges. A natural generalization of our work would be to determine the extent to which our results hold for metric graphs. The work by \cite{chan} on hyperelliptic graphs was done in the setting of metric graphs, so some of our results may extend via similar arguments.

Another natural question would be that of algorithmically testing whether or not a graph has gonality $3$.  In general, computing the divisorial gonality of a graph is NP-hard \cite{gij}, but it is possible to check if a graph has gonality $2$ in $O(n \log n + m)$ time \cite{recognize}.  The next step would be to develop an efficient algorithm for determining if a graph has gonality $3$.  There is a na\"{i}ve polynomial time algorithm that enumerates all effective divisors of degree $3$, then tests each such divisor against all possible placements of $-1$ chips using Dhar's burning algorithm.  However, a more efficient algorithm could be a helpful computational tool. The criteria we present in Theorem \ref{thm:maintheorem} may be useful for this endeavor.

\bigskip
 
\noindent \emph{Acknowledgements.}  The authors are grateful for support they received from NSF Grants DMS1659037 and DMS1347804 and from the Williams College SMALL REU program.  They also thank Melody Chan and two anonymous referees for many helpful comments on earlier versions of the paper.

\nocite{*}
\bibliographystyle{amsplain-ac}
\bibliography{bibliography}

\end{document}